\definecolor{dark-red}{rgb}{0.6,0,0}
\definecolor{dark-green}{rgb}{0,0.4,0}
\definecolor{medium-blue}{rgb}{0,0,0.5}
\newcommand{\an}{\mr{an}}
\newcommand{\gr}{\mr{gr}}
\newcommand{\Gr}{\mathrm{Gr}}
\newcommand{\Fl}{\mr{Fl}}
\newcommand{\Spd}{\mathrm{Spd}}
\newcommand{\GL}{\mathrm{GL}}
\newcommand{\HT}{\mr{HT}}
\newcommand{\mc}[1]{\mathcal{#1}}
\newcommand{\mbb}[1]{\mathbb{#1}}
\newcommand{\mr}[1]{\mathrm{#1}}
\newcommand{\mf}[1]{\mathfrak{#1}}
\newcommand{\dR}{\mathrm{dR}}
\newcommand{\BC}{\mathrm{BC}}
\newcommand{\adm}{\mathrm{adm}}
\newcommand{\ab}{\mathrm{ab}}\newcommand{\der}{\mathrm{der}}
\newcommand{\qp}{\mathbb{Q}_p}
\newcommand{\qpbar}{\overline{\mathbb{Q}}_p}
\newcommand{\qpbreve}{\breve{\mathbb{Q}}_p}
\DeclareMathOperator{\Lie}{Lie}
\newcommand{\ul}[1]{\underline{#1}}
\newcommand{\Hdg}{\mr{Hdg}}
\newcommand{\vsp}{\mathrm{vsp}}
\newcommand{\nvsp}{\mathrm{non-vsp}}
\newcommand{\sm}{\mathrm{sm}}
\newcommand{\End}{\mr{End}}
\numberwithin{equation}{subsection}
\numberwithin{equation}{subsubsection}
\theoremstyle{plain}
\newtheorem*{theorem*}{Theorem}
\newtheorem{theorem}[subsubsection]{Theorem}
\newtheorem{corollary}[subsubsection]{Corollary}
\newtheorem{conjecture}[subsubsection]{Conjecture}
\newtheorem{proposition}[subsubsection]{Proposition}
\newtheorem{lemma}[subsubsection]{Lemma}
\theoremstyle{definition}
\newtheorem{remark}[subsubsection]{Remark}
\newcommand{\fpbar}{\overline{\mathbb{F}}_p}
\newcommand{\alg}{\mr{alg}}
\newcommand{\M}{\mathscr{M}}
\newcommand{\lf}{\mathrm{lf}}
\newcommand{\lfid}{{\diamond_{\lf}}}
\title[A cohomological smoothness conjecture]{A cohomological smoothness conjecture for moduli of mixed characteristic local shtukas with one leg}
\author{Sean Howe}
\begin{document}

\begin{abstract}  We give a simple geometric characterization of the locus where the inscribed Banach--Colmez Tangent Spaces of moduli of mixed characteristic local shtukas with one leg and fixed determinant are connected. We conjecture that the structure morphism for the underlying diamond is cohomologically smooth over this locus and, applying the Fargues-Scholze Jacobian criterion, we prove this conjecture in the case of EL infinite level Rapoport-Zink spaces, generalizing a result of Ivanov--Weinstein in the basic case.
\end{abstract}

\maketitle

\tableofcontents

\section{Introduction}
Let $E/\mathbb{Q}_p$ be a finite extension, let $G/E$ be a connected reductive group, let $[\mu]$ be a conjugacy class of cocharacters of $G_{\overline{E}}$ with field of definition $E([\mu])$, and let $b \in G(\breve{E})$ represent a class in the Kottwitz set $B(G,[\mu^{-1}])$. Attached to this data there is, as in \cite{ScholzeWeinstein.BerkeleyLecturesOnPAdicGeometryAMS207, HoweKlevdal.AdmissiblePairsAndpAdicHodgeStructuresIITheBiAnalyticAxLindemannTheorem, Howe.InscriptionTwistorsAndPAdicPeriods}, a diamond infinite level moduli of mixed characteristic local shtukas with one leg 
\[ \M_{b,[\mu]} / \Spd \breve{E}([\mu]) \]
equipped with lattice Hodge and Hodge-Tate period maps to Schubert cells in the $\mathbb{B}^+_\dR$-affine Grassmannian of $G$,
\[ \pi_{\Hdg}^+: \M_{b,[\mu]} \rightarrow \Gr_{[\mu], \breve{E}([\mu])} \textrm{ and } \pi_{\HT}^+: \M_{b,[\mu]} \rightarrow \Gr_{[\mu^{-1}], \breve{E}([\mu])}. \]
It is also equipped with commuting actions of $G(E^\diamond)$ (where $E^\diamond$ is the topological constant sheaf usually denoted $\ul{E}$) and a more exotic group diamond $\tilde{G}_b$. The period map $\pi_{\HT}^+$ is a $G(E^\diamond)$-equivariant $\tilde{G}_b$-torsor over its image, the Newton stratum $\Gr_{[\mu^{-1}]}^{[b]}$ (a locally closed subdiamond), and the period map $\pi_{\Hdg}^+$ is a $\tilde{G}_b$-equivariant $G(\mathbb{Q}_p^\diamond)$-equivariant torsor over its image, the $b$-admissible locus $\Gr_{[\mu]}^{b-\adm}$ (an open subdiamond).

 When $[\mu]$ is minuscule, $\M_{b,[\mu]}$ is a diamond infinite level local Shimura variety, $\Gr_{[\mu]}^{b-\adm}$ is the diamond associated to an open $\Fl_{[\mu^{-1}]}^{b-\adm}$ in the rigid analytic flag variety $\Fl_{[\mu^{-1}]}$, and $\M_{b,[\mu]}$ is an inverse limit of the diamonds associated to \'{e}tale covers $\M_{b,[\mu],K_p}^{\mr{rig}}$ of $\Fl_{[\mu]}^{b-\adm}$ indexed by compact open subgroups $K_p \leq G(\mathbb{Q}_p)$ such that $\M_{b,[\mu]}/K_p = (\M_{b,[\mu]}^{\mr{rig}})^\diamond.$
These local Shimura varieties, whose existence was first predicted in \cite{RapoportViehmann.TowardsATheoryOfLocalShimuraVarieties}, include the rigid generic fibers of Rapoport--Zink \cite{RapoportZink.PeriodSpacesForpDivisibleGroups} moduli spaces of $p$-divisible groups with extra structures.

Let $\mathbb{C}_p=\overline{E}^\wedge$. In \cite{IvanovWeinstein.TheSmoothLocusInInfiniteLevelRapoportZinkSpaces}, Ivanov and Weinstein used the Fargues--Scholze \cite{FarguesScholze.GeometrizationOfTheLocalLanglandsCorrespondence} Jacobian criterion  to construct a large cohomologically smooth locus in a component of $\M_{b,[\mu],\mbb{C}_p}$ when this space is the infinite level Rapoport--Zink space associated to an EL moduli problem and $b$ is basic (in particular, they work with $E=\mbb{Q}_p$). Cohomological smoothness is a natural finiteness property for $\ell$-adic cohomology and this result provided, in particular, a conceptual explanation for earlier cohomological finiteness results for infinite level Lubin-Tate space obtained by Weinstein \cite{Weinstein.SemistableModelsForModularCurvesOfArbitraryLevel} through the construction of explicit formal models.

In this work, we extend the result of Ivanov--Weinstein to the full infinite level EL Rapoport--Zink case (i.e. we remove the basic hypothesis), and conjecture a generalization to all $\M_{b,[\mu]}$. The proof combines the method of \cite{IvanovWeinstein.TheSmoothLocusInInfiniteLevelRapoportZinkSpaces} with the computation of Banach--Colmez Tangent Bundles via inscription in \cite{Howe.InscriptionTwistorsAndPAdicPeriods}, and this computation leads to our general conjecture. 

\subsection{A conjecture and a theorem}

\subsubsection{}The formation of the spaces $\M_{b,[\mu]}$ is functorial in the data, so, for $\det:G \rightarrow G^{\mr{ab}}$ the canonical map from $G$ to its abelianization, we obtain 
\[ \det_{b,[\mu]}: \M_{b,[\mu]} \rightarrow \M_{\det(b), [\det\circ \mu], \breve{E}([\mu])}.\]
After base-change to $\mathbb{C}_p$, the space on the right is a locally profinite set (it is a torsor for $G^{\mr{ab}}(E)$ over $\Spd \breve{E}([\mu])$), so we may fix a point $\tau: \Spd \mathbb{C}_p \rightarrow \M_{\det(b), [\det\circ \mu], \breve{E}([\mu])}$. We write $\M_{b,[\mu]}^\tau$ for the fiber over $\tau$, a diamond over $\Spd \mathbb{C}_p$, and consider the induced period maps
\[ \pi_{\Hdg}^{+,\tau}: \M_{b,[\mu]}^\tau \rightarrow \Gr_{[\mu], \mathbb{C}_p} \textrm{ and } \pi_{\HT}^{+,\tau}: \M_{b,[\mu]}^\tau \rightarrow \Gr_{[\mu^{-1}], \mathbb{C}_p} . \]

\subsubsection{} In \cref{cor.vsp-closed}, we show there is a closed sub-diamond $\M^{\tau,\vsp}_{b,[\mu]}$, the very special locus, characterized by the following property: a rank one geometric point $z: \Spd(C) \rightarrow \M_{b,[\mu]}^\tau$ factors through $\M^{\tau,\vsp}_{b,[\mu]}$ if and only if $z$ not an isolated point in the intersection of its fibers for the two period maps, $(\pi_\HT^{+,\tau})^{-1}(\pi_{\HT}^{+,\tau}(z)) \cap (\pi_{\Hdg}^{+,\tau})^{-1}(\pi_{\Hdg}^{+,\tau}(z)).$

To make sense of this condition concretely, note that, since $\pi_{\Hdg}^{+,\tau}$ is a $G^\der(E)$-torsor (\cref{lemma.tau-torsor}), the orbit map of $z$ identifies $(\pi_{\Hdg}^{+,\tau})^{-1}(\pi_{\Hdg}^{+,\tau}(z))$ with the locally profinite set $G^\der(E)$, and then the intersection with the fiber of $\pi_{\HT}^+$ is a closed subset of $G^\der(E)$. It is in this topology that we ask for $z$ not to be isolated; note that this topology is the same as the diamond topology on the diamond-theoretic fiber of $\pi_{\HT}^{+,\tau} \times \pi_\Hdg^{+,\tau}$ above $\pi_{\HT}^{+,\tau}(z) \times \pi_\Hdg^{+,\tau}(z)$. 

\begin{remark}
    We note that $z$ being isolated is equivalent to the intersection of the fibers being discrete (this follows, e.g., from \cref{lemma.fiber-orbit}), so the condition only depends on the two periods and not on the choice of $z$ in the intersection of the fibers. 
\end{remark}

 We write $\M_{b,[\mu]}^{\tau,\nvsp}$ for the open complement of $\M_{b,[\mu]}^{\tau,\vsp}$. 
\begin{conjecture}\label{conj.cohom-smooth}
    The structure morphism $\M_{b,[\mu]}^{\tau,\nvsp} \rightarrow \Spd \mbb{C}_p$ is cohomologically smooth. 
\end{conjecture}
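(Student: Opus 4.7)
The natural strategy is to apply the Fargues--Scholze Jacobian criterion for cohomological smoothness of moduli of sections over the Fargues--Fontaine curve, with the requisite positivity of the tangent bundle verified via the inscribed Banach--Colmez Tangent Space computation of \cite{Howe.InscriptionTwistorsAndPAdicPeriods}.

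First, I would set up, locally near any point of $\M_{b,[\mu]}^{\tau,\nvsp}$, a presentation of (a suitable cover of) the space as a moduli of sections of a smooth projective morphism $Z \to \mathcal{X}_S$ over a relative Fargues--Fontaine curve. In the EL Rapoport--Zink setting this is provided by the Scholze--Weinstein realization of $\M_{b,[\mu]}$ in terms of modifications of vector bundles, and is presumably the input to the theorem actually proved in the paper. For general $G$ the only obvious route is Tannakian: fix an embedding $G \hookrightarrow \GL_n$ with auxiliary data, obtain an analogous presentation of the corresponding $\GL_n$-moduli from the EL case, and then carve out the $G$-locus by requiring preservation of the extra tensors. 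One then hopes cohomological smoothness descends along the quotient by the auxiliary torsor.

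Next I would combine this with the inscription formalism, which identifies the tangent space at a rank one point $z$ with $\BC(\mathcal{E}_z)$ for an explicit bundle $\mathcal{E}_z$ on the Fargues--Fontaine curve built from the periods at $z$. The definition of the very special locus should translate directly into: $z \in \M_{b,[\mu]}^{\tau,\vsp}$ exactly when $\mathcal{E}_z$ has a negative Harder--Narasimhan slope, since connectedness of $\BC(\mathcal{E}_z)$ corresponds to $\mathcal{E}_z$ being of everywhere non-negative slope. On $\M_{b,[\mu]}^{\tau,\nvsp}$ the pointwise positivity together with closedness of $\M_{b,[\mu]}^{\tau,\vsp}$ (\cref{cor.vsp-closed}) and semicontinuity of HN polygons should give the family-wise positivity hypothesis of the Jacobian criterion, yielding cohomological smoothness.

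The hard part is the first step outside the EL case: I know no canonical smooth projective presentation over the Fargues--Fontaine curve for a general reductive $G$, and the Tannakian approach raises delicate issues about whether the locus cut out by the tensors remains a smooth projective sub-bundle and whether the quotient by the auxiliary group diamond preserves cohomological smoothness in the way one needs. A secondary issue is to reconcile two a priori distinct notions of tangent space -- the normal bundle arising from the sections-presentation of the Jacobian criterion, and the inscribed Banach--Colmez Tangent Space -- so as to actually transport connectedness into the form the criterion demands; this comparison is the technical heart of what must be verified.
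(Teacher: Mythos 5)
The statement you are addressing is stated in the paper as a conjecture: the paper itself offers no proof in general, only in the EL Rapoport--Zink case (\cref{theorem.minuscule-EL}, proved as \cref{corollary.non-sp-locus}), and for that case your outline does match the paper's argument in essentials --- realize $\overline{\M_{b,[\mu]}^\tau}$ as the (inscribed) moduli of sections of the smooth \emph{quasi-projective} scheme $Z/X_{\mathbb{C}_p^\flat}$ constructed by Ivanov--Weinstein (not Scholze--Weinstein; see \cref{prop.FS-moduli}), identify the tangent object of the Jacobian criterion with the inscribed Banach--Colmez Tangent Space $\BC(\mc{E}^\circ_{\mr{max}})$ (\cref{lemma.tangent-tau}), and apply \cite[Theorem IV.4.2]{FarguesScholze.GeometrizationOfTheLocalLanglandsCorrespondence}. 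The reconciliation of the two notions of tangent space that you flag as the technical heart is exactly what the paper carries out in the proof of \cref{corollary.non-sp-locus}, using \cite[Example 4.4.3-(2)]{Howe.InscriptionTwistorsAndPAdicPeriods}, full faithfulness of the functor from non-negative-slope bundles to Banach--Colmez spaces, and a dimension count. Beyond the EL case, however, your proposal is not a proof, and you correctly sense why: the Tannakian reduction is speculative. Concretely, (i) it is not known that the locus cut out by tensors inside a $\GL_n$-moduli of sections is again a moduli of sections of a smooth quasi-projective scheme over the curve, nor that cohomological smoothness descends along the auxiliary quotient you would need; and (ii) the paper's control of slopes rests on the duality $(\mc{E}^\circ_{\mr{max}})^* \cong \mc{E}^\circ_{\mr{min}}$ via the Killing form (\cref{lemma.dual-of-e-max}), i.e. on reductivity of $G$ used intrinsically, and this step has no evident analogue after embedding into $\GL_n$ and imposing tensors. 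So the general case remains exactly the open problem the paper states.

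There is also a genuine error in your slope dictionary. The bundle $z^*\mc{E}^\circ_{\mr{max}}$ \emph{never} has negative Harder--Narasimhan slopes (\cref{lemma.non-negative-slopes}); the very special condition is that $0$ occurs as a slope (\cref{prop.HN-slopes-condition}, \cref{remark.connected}), equivalently that the Tangent Space acquires a locally profinite, hence disconnected, direction, and the non-very-special locus is precisely where all slopes are strictly positive --- which is the actual hypothesis of the Jacobian criterion (the criterion's smooth locus $\mc{M}_Z^{\sm}$ is defined pointwise by strict positivity, so no separate ``family-wise'' positivity argument is needed; openness is part of the Fargues--Scholze statement). This matters for your semicontinuity step as well: closedness of the very special locus (\cref{cor.vsp-closed}) follows from semicontinuity of the Harder--Narasimhan polygon only because non-negativity of the slopes is known in advance, so that ``zero occurs as a slope'' is a closed condition; with your formulation (``a negative slope occurs'') the argument does not parse.
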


\begin{theorem}\label{theorem.minuscule-EL}(see \cref{corollary.non-sp-locus})
\cref{conj.cohom-smooth} holds in the EL Rapoport-Zink case. 
\end{theorem}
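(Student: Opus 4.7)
The plan is to follow and extend the Ivanov--Weinstein argument from the basic case \cite{IvanovWeinstein.TheSmoothLocusInInfiniteLevelRapoportZinkSpaces}, feeding into it the inscription-theoretic computation of Banach--Colmez tangent bundles from \cite{Howe.InscriptionTwistorsAndPAdicPeriods}; the role of inscription is precisely to identify the cohomologically smooth locus produced by the Fargues--Scholze Jacobian criterion with the geometrically defined $\M_{b,[\mu]}^{\tau,\nvsp}$.

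First, I would realize $\M_{b,[\mu]}^\tau$, in the EL minuscule setting, as an open subdiamond of a $v$-sheaf of sections of a smoothly fibered family over the Fargues--Fontaine curve $X_{\mbb{C}_p}$. The EL data presents $\M_{b,[\mu]}$ as a moduli of $[\mu^{-1}]$-modifications of the isocrystal bundle $\mc{E}_b$ equipped with a trivialization compatible with the EL structure; fixing $\tau$ pins down the induced trivialization on the determinant. This is the same input used in \cite{IvanovWeinstein.TheSmoothLocusInInfiniteLevelRapoportZinkSpaces} in the basic case, and the construction extends formally to non-basic $b$. The Fargues--Scholze Jacobian criterion then yields cohomological smoothness at any geometric point $z$ at which the Banach--Colmez tangent bundle at $z$ has no negative Harder--Narasimhan slopes, equivalently $H^1$ of the tangent bundle vanishes.

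Second, I would identify this $H^1$-vanishing locus with $\M_{b,[\mu]}^{\tau,\nvsp}$. By the inscription formula, the Banach--Colmez tangent space at $z$ computes the relative tangent space of the combined period map $\pi_{\HT}^{+,\tau} \times \pi_\Hdg^{+,\tau}$ at $z$; using the $G^\der(E)$-torsor structure on $\pi_\Hdg^{+,\tau}$ (\cref{lemma.tau-torsor}), connectedness of this tangent space (equivalently $H^1=0$) is equivalent to the intersection of the two period fibers through $z$ failing to be discrete at $z$, which is exactly the non-very-special condition of \cref{cor.vsp-closed} (as reformulated through \cref{lemma.fiber-orbit}). Combining the two steps gives \cref{corollary.non-sp-locus}, and hence the theorem.

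I expect the main obstacle to lie in the matching step for non-basic $b$. In the basic case, $\mc{E}_b$ is semistable and most slope-theoretic inequalities either simplify or trivialize; for general $b$, the adjoint bundle $\Ad(\mc{E}_b)$ carries a non-trivial Harder--Narasimhan filtration that interacts with the modification filtration imposed by $[\mu]$, and one must verify directly---using the concrete EL description of the relative tangent bundle and the explicit shape of $\tilde G_b$---that the Jacobian-criterion slope condition on the Banach--Colmez tangent bundle is equivalent to the non-isolation condition defining $\nvsp$. Once the slope bookkeeping is carried out in the EL presentation, the rest of the argument is a routine assembly of the two inputs.
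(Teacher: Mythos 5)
Your overall skeleton (realize $\M_{b,[\mu]}^\tau$ as a moduli of sections of a smooth scheme over the Fargues--Fontaine curve, apply the Fargues--Scholze Jacobian criterion, and use the inscription computation of the tangent bundle to identify the smooth locus with the non-very-special locus) is the paper's strategy, but two of your key steps are stated incorrectly and the genuinely hard step is deferred rather than proved. First, the Jacobian criterion does not apply where the pulled-back tangent bundle has ``no negative Harder--Narasimhan slopes, equivalently $H^1=0$'': the locus $\mc{M}_Z^\sm$ of \cite[Definition IV.4.1]{FarguesScholze.GeometrizationOfTheLocalLanglandsCorrespondence} requires \emph{strictly positive} slopes. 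Since the slopes of $z^*\mc{E}^\circ_{\mr{max}}$ are always non-negative (\cref{lemma.non-negative-slopes}), your stated condition would be satisfied at every point, ``proving'' smoothness of all of $\M_{b,[\mu]}^\tau$ including the very special locus --- the whole content of the theorem is that the dividing line is whether $0$ occurs as a slope. Relatedly, your chain ``connected tangent space $\Leftrightarrow H^1=0$ $\Leftrightarrow$ fibers fail to be discrete $\Leftrightarrow$ non-very-special'' has the middle equivalences reversed: connectedness of $\BC(z^*\mc{E}^\circ_{\mr{max}})$ means no zero slope, which corresponds to the fiber intersection being \emph{discrete}, which is the non-very-special condition (\cref{prop.HN-slopes-condition}); and $H^1=0$ is slopes $\geq 0$, not connectedness.

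Second, you locate the ``main obstacle'' in carrying out slope bookkeeping in the explicit EL presentation of $s^*T_{Z/X_{\mathbb{C}_p^\flat}}$ for non-basic $b$, using the shape of $\tilde G_b$ --- but you do not carry it out, and this is exactly the computation the paper is designed to avoid. In the paper, the identification of tangent data is formal: the Ivanov--Weinstein scheme $Z$ already works for general $b$ in the EL setting, the inscribed moduli-of-sections formalism gives $\BC(s^*T_{Z^\an/X_{\mathbb{C}_p^\flat}})=T_{(Z)^\lfid,z}=\BC(z^*\mc{E}^\circ_{\mr{max}})$ (\cref{prop.FS-moduli}, \cref{lemma.tangent-tau}), and one upgrades this identity of Banach--Colmez spaces to an identity of bundles via full faithfulness on non-negative-slope bundles \cite{LeBras.EspacesDeBanachColmezEtFaisceauxCoherentsSurLaCourbeDeFarguesFontaine} together with a dimension count --- a step your proposal omits entirely, though it is needed because the Jacobian criterion is a condition on the bundle, not on its space of sections. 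The equivalence between strict positivity of slopes and non-isolation in the period fibers is then not an EL-specific slope computation at all: it is the group-theoretic argument of \cref{prop.HN-slopes-condition}, resting on the Killing-form duality $(\mc{E}^\circ_{\mr{max}})^*\cong\mc{E}^\circ_{\mr{min}}$ (\cref{lemma.dual-of-e-max}), the identification $T_{F_z}=\BC(\mc{E}^\circ_{\mr{min}})$ (\cref{lemma.min-tangent-fiber}), and exponentiation in the $p$-adic Lie group $\mr{Stab}_{G^\der(E)}(\pi_\HT^{+,\tau}(z))$ (\cref{lemma.fiber-orbit}, \cref{lemma.discrete-lie-translation}). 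As written, your proposal leaves precisely this equivalence --- the mathematical heart of the extension beyond the basic case --- as an unexecuted plan, and the route you sketch for it (redoing the Ivanov--Weinstein presentation with $\tilde G_b$) is the harder and unnecessary path.
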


Here the EL Rapoport-Zink case is as in \cite[\S2.2]{IvanovWeinstein.TheSmoothLocusInInfiniteLevelRapoportZinkSpaces}, recalled at the beginning of \cref{s.EL-case}; in particular, we work over $E=\mathbb{Q}_p$ in \cref{theorem.minuscule-EL} (but this is not a serious constraint).

\begin{remark}
The case of \cref{theorem.minuscule-EL} when $b$ is basic is equivalent to \cite[Theorem 1.0.1]{IvanovWeinstein.TheSmoothLocusInInfiniteLevelRapoportZinkSpaces}, though this is not immediately obvious because the definition of the very special\footnote{In \cite{IvanovWeinstein.TheSmoothLocusInInfiniteLevelRapoportZinkSpaces}, this is just called the special locus. We use the term very special instead of special because, from a Tannakian perspective as in \cite{HoweKlevdal.AdmissiblePairsAndpAdicHodgeStructuresIIIVariationAndUnlikelyIntersection}, it is natural to reserve the word special for the locus of points where there is any reduction of the Tannakian structure group from the generic structure group --- in general, such a reduction can come from the existence of other Hodge tensors besides endomorphisms. Our terminology is not perfect: outside the basic case, very special does not actually imply special in this Tannakian sense! For example, the endomorphisms of an ordinary $p$-divisible group over $\mathcal{O}_{\mathbb{C}_p}$ may not correspond to endomorphisms of the associate Tannakian object parameterized by a point in the Serre-Tate moduli space, which records an added rigidification (cf., e.g., \cite[\S6.3]{HoweKlevdal.AdmissiblePairsAndpAdicHodgeStructuresITranscendenceOfTheDeRhamLattice} for related considerations).} locus given there is different: it is defined in loc. cit. as the locus consisting of the geometric points $z$ such that there are extra endomorphisms of the $p$-divisible group parameterized by $z$. We show these conditions are equivalent in this case (\cref{lemma.vsp-pdiv}); in general, our geometric very special condition is equivalent to the associated modification of $G$-bundles admitting extra infinitesimal automorphisms (cf. \cref{prop.HN-slopes-condition}).
\end{remark}

\subsection{Tangent Spaces and proof strategy}
The proof of \cite[Theorem 1.0.1]{IvanovWeinstein.TheSmoothLocusInInfiniteLevelRapoportZinkSpaces} proceeds as follows: first, they construct a smooth quasi-projective variety $Z$ over the Fargues-Fontaine curve $X_{\mathbb{C}_p^\flat}$ and show that $\M_{b,[\mu]}^\tau$ is isomorphic to the moduli of sections of $Z$. Using their explicit construction of $Z$, they then give a presentation of the pull-back of the relative tangent bundle $s^*T_{Z/X_{\mathbb{C}_p^\flat}}$ along any section $s$. Finally, they analyze this presentation to show that it has positive slopes exactly when the corresponding point parameterizes a $p$-divisible group that does not have extra endomorphisms (this is accessible via the Hodge-Tate filtration using the Scholze-Weinstein classification \cite{ScholzeWeinstein.ModuliOfpDivisibleGroups}). The Fargues-Scholze Jacobian criterion then implies that this locus is open and cohomologically smooth. 

The above strategy applied in the infinite level basic EL Rapoport-Zink case. In \cite{Howe.InscriptionTwistorsAndPAdicPeriods}, we upgraded \emph{all} of the moduli spaces $\M_{b,[\mu]}$ to \emph{inscribed $v$-sheaves}, which is a differential enrichment of the theory of diamonds, then computed their internal tangent bundles within that theory by quite different means. One easily deduces an inscribed structure on $\M_{b,[\mu]}^\tau$, and a computation of its inscribed tangent bundle. Restricting back to the underlying $v$-sheaf gives rise to a Banach-Colmez Tangent Bundle that is the space of global sections of a vector bundle $\mc{E}^\circ_{\mr{max}}$ on the relative Fargues-Fontaine curve with non-negative slopes. Using this, we are able to break up the general problem into two different steps.

First, we show $\M_{b,[\mu]}^{\tau,\vsp}$ is precisely the locus consisting of geometric points where $\mc{E}^\circ_{\mr{max}}$ admits zero as a Harder-Narasimhan slope (i.e. the set of points that have a locally profinite tangent direction) --- in particular, since $\mc{E}^\circ_{\mr{max}}$ has non-negative Harder-Narasimhan slopes, this is a closed subdiamond. This analysis is carried out in \cref{s.analysis}, culminating in \cref{prop.HN-slopes-condition} and \cref{cor.vsp-closed}. The starting point is the computation of the tangent bundles in \cite[\S9.4]{Howe.InscriptionTwistorsAndPAdicPeriods}, but the comparison with the geometric condition also requires a duality trick to obtain information about the tangent bundle of the inscribed intersection (it is here that we use the reductive hypothesis) and then a $p$-adic Lie group trick to pass information about the inscribed tangent bundle of the intersection back into its geometry.

If we knew that the Jacobian criterion applied to these Tangent Bundles, then this would essentially\footnote{Because we obtain an identity of Banach-Colmez spaces of global sections rather than vector bundles, we need a small additional argument to apply the Jacobian criterion; see the proof of \cref{corollary.non-sp-locus}.} prove \cref{conj.cohom-smooth}. Thus, in order to obtain \cref{theorem.minuscule-EL}, it suffices to show the Jacobian criterion applies in the  Rapoport-Zink EL case. But in \cite{Howe.InscriptionTwistorsAndPAdicPeriods} we have also shown that, for a quasi-projective scheme $Z/X_{\mathbb{C}_p^\flat}$ as above, one obtains an inscribed moduli of sections, and the associated Tangent Bundle is precisely the one appearing in the Jacobian criterion. Thus we can conclude if we can show the inscribed moduli of sections construction associated to some smooth quasi-projective $Z$ produces the same inscribed structure obtained from \cite{Howe.InscriptionTwistorsAndPAdicPeriods}. But it is almost formal that the construction of $Z$ in \cite{IvanovWeinstein.TheSmoothLocusInInfiniteLevelRapoportZinkSpaces} accomplishes this in the general Rapoport-Zink EL setting! We explain this in \cref{s.EL-case}. 

\begin{remark} Note that in this proof we never actually need to compare our presentation of the Tangent Bundle to the presentation given in \cite{IvanovWeinstein.TheSmoothLocusInInfiniteLevelRapoportZinkSpaces} --- that the Tangent Bundles are the same is given to us automatically by comparing the inscribed structures. In fact, the reason we can extend the result of \cite{IvanovWeinstein.TheSmoothLocusInInfiniteLevelRapoportZinkSpaces} beyond the basic case is essentially because the presentation of the Tangent Bundle coming from \cite{Howe.InscriptionTwistorsAndPAdicPeriods} is simpler to work with than the presentation in \cite{IvanovWeinstein.TheSmoothLocusInInfiniteLevelRapoportZinkSpaces}! 
\end{remark}

\begin{remark}
    When $G^\mr{ab} \neq 0$, one must pass to $\M_{b,[\mu]}^\tau$ to obtain an interesting result because the action of the center $Z(G)(E)$ on $\M_{b,[\mu]}$ contributes locally profinite tangent directions at every point --- this is reflected in the failure of cohomological smoothness for any non-empty open subdiamond, which occurs because, using the same action and $\det_{b,[\mu]}$, one finds that any such subdiamond has infinitely many geometric connected components.  
\end{remark}

\subsection{Acknowledgements}We thank Christian Klevdal, Gilbert Moss, and Peter Wear for participating in a reading group on \cite{IvanovWeinstein.TheSmoothLocusInInfiniteLevelRapoportZinkSpaces} that ultimately led to this work. We thank Alexander Ivanov and Jared Weinstein for helpful conversations about the results of \cite{IvanovWeinstein.TheSmoothLocusInInfiniteLevelRapoportZinkSpaces}.

During the preparation of this work, the author was supported by the National Science Foundation through grants DMS-2201112 and DMS-2501816. The author also received support as a visitor at the 2023 Hausdorff Trimester on The Arithmetic of the Langlands Program at the Hausdorff Research Institute for Mathematics from the Deutsche Forschungsgemeinschaft (DFG, German Research Foundation) under Germany’s Excellence Strategy– EXC-2047/1–390685813, as a member at the Institute for Advanced Study during the academic year 2023-24 from the Friends of the Institute for Advanced Study Membership, and during the academic year 2024-2025 from the University of Utah Faculty Fellow program.

\section{Analysis of the tangent bundle}\label{s.analysis}
In this section we will analyze the Tangent Bundle of $\M_{b,[\mu]}^\tau$; in particular, we will show the pointwise condition defining the non-very-special locus is equivalent to the Tangent Space at that point being connected (see \cref{prop.HN-slopes-condition}/\cref{remark.connected}). 

\subsection{Setup}
Let $E/\mbb{Q}_p$ be a finite extension with residue field $\mbb{F}_q$, fix an algebraic closure $\overline{E}$ with completion $\mathbb{C}_p:=\overline{E}^\wedge$, and let $\breve{E}$ be the completion in $
\mathbb{C}_p$ of the maximal unramified extension of $E$ in $\overline{E}$.

Let $G/E$ be a connected reductive group, let $b \in G(\breve{E})$, and let $[\mu]$ be a conjugacy class of cocharacters of $G_{\overline{E}}$ such that $b$ represents a class in the Kottwitz set $B(G,[\mu^{-1}])$.  We write $G^\der$ for the derived subgroup of $G$ and $G^\ab=G/G^\der$ for its abelianization. We write $\det$ for the projection $G \rightarrow G^\ab$. 

Let $\M_{b,[\mu]}/\Spd \breve{E}([\mu])$ be the inscribed moduli of sections defined in \cite[\S9.4]{Howe.InscriptionTwistorsAndPAdicPeriods} (so that the diamond of the same name in the introduction is the underlying $v$-sheaf $\overline{\M_{b,[\mu]}}$). The construction of inscribed moduli of sections is functorial in the data $b$ and $[\mu]$ (it is given by a space of modifications between two $G$-bundles on the relative thickened Fargues-Fontaine curve, and the functoriality is via push-out), thus $\det$ induces a map 
\[ \mr{det}_{b,[\mu]}: \M_{b,[\mu]} \rightarrow \M_{\det(b), [\det(\mu)], \breve{E}([\mu])}.\] 

We fix a point $\tau: \Spd \mathbb{C}_p \rightarrow \M_{\det(b), \det(\mu), \breve{E}([\mu])}$ that lies in the image of $\det_{b,[\mu]}$ (such a point $\tau$ exists as a consequence of, e.g., \cite[Proposition 7.3.3]{HoweKlevdal.AdmissiblePairsAndpAdicHodgeStructuresIITheBiAnalyticAxLindemannTheorem}). We obtain an inscribed $v$-sheaf over $\Spd \mbb{C}_p$, $\M_{b,[\mu]}^\tau := \tau^* \M_{b,[\mu]}$,  
whose underlying $v$-sheaf $\overline{\M_{b,[\mu]}^\tau}$ is the diamond considered in the introduction. It admits period maps to the inscribed Schubert cells 
\[ \pi_{\Hdg}^{+,\tau}: \M_{b,[\mu]}^\tau \rightarrow \Gr_{[\mu], \mbb{C}_p} \textrm{ and } \pi_{\HT}^{+,\tau}: \M_{b,[\mu]}^\tau \rightarrow \Gr_{[\mu^{-1}], \mbb{C}_p} \]
induced from the maps $\pi_{\Hdg}^+$ and $\pi_{\HT}^+$ of \cite[\S9.4]{Howe.InscriptionTwistorsAndPAdicPeriods} on $\M_{b,[\mu]}$; the induced maps on underlying $v$-sheaves are the period maps that appeared in the introduction.  

\subsection{Computation of the tangent bundle}
It follows from the general formalism of \cite{Howe.InscriptionTwistorsAndPAdicPeriods} that $T_{\M_{b,[\mu]}^\tau}$ is
the pullback to $\M_{b,[\mu]}^\tau$ of the kernel of $d\det_{b,[\mu]}$, the derivative of $\det_{b,[\mu]}$.

We now describe this kernel. To that end, note that $\Lie G=\Lie G^{\der} \oplus \Lie Z(G)$, and the kernel of the derivative of $\det: G \rightarrow G^\ab$ at the identity element is $\Lie G^\der$. Writing $\mf{g}^\circ = \Lie G^{\der}(E)$ and $\mf{z} = \Lie Z(G)(E)$, we have $\mf{g}=\mf{g}^\circ \oplus \mf{z}$ as a representation of $G$ (via the adjoint action). 

We write $\mf{g}^\circ_b$ for $\mf{g}^\circ \otimes_E \breve{E}$ equipped with the isocrystal structure coming from the adjoint action of $b$. Over $\M_{b,[\mu]}$, we have a universal meromorphic isomorphism $\mc{E}_1 \rightarrow \mc{E}_b$ over $\mc{X}\backslash \infty$, where these are the trivial $G$-bundle and the $G$-bundle associated to $b$, respectively, on the relative thickened Fargues-Fontaine curve $\mc{X}$. If we push-out these bundles to $\mf{g}^\circ$, and restrict to a punctured formal neighborhood of the canonical Cartier divisor $\infty$ on $\mc{X}$, we obtain an isomorphism over $\M_{b,[\mu]}$,
\[ \mf{g}^\circ \otimes_E \mbb{B}_\dR = \mf{g}_b^\circ \otimes_{\breve{E}} \mbb{B}_\dR. \]
We write 
\[ \mf{g}^{\circ,+}_{\mr{max}}:=\mf{g}^\circ_b \otimes_{\breve{E}} \mbb{B}^+_\dR + \mf{g}^\circ \otimes_{E} \mbb{B}^+_\dR, \]
a locally free $\mbb{B}^+_\dR$-module, and form the modification $\mc{E}^\circ_{\max}$ of $\mf{g}^\circ \otimes_E \mathcal{O}_{\mc{X}}$ by this lattice. Recall that $\BC(\mathcal{E}^\circ_{\mr{max}})$ denotes the inscribed $v$-sheaf of global sections of this vector bundle. 

\begin{lemma}\label{lemma.tangent-tau} There is a canonical identification $T_{\M_{b,[\mu]}^\tau}=\ker d\det_{b,[\mu]}=\BC(\mathcal{E}^\circ_{\mr{max}})$.
\end{lemma}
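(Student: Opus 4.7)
The plan is to deduce this identification from three ingredients: the general Tangent Bundle computation for inscribed moduli of sections from \cite[\S9.4]{Howe.InscriptionTwistorsAndPAdicPeriods}, the functoriality of that construction under $G \to G^{\ab}$, and the general principle that the tangent bundle of an inscribed fiber over a point is the (pullback of) the kernel of the derivative.

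First, I would apply the cited computation to both $(G, b, [\mu])$ and $(G^{\ab}, \det(b), [\det\circ\mu])$. This yields $T_{\M_{b,[\mu]}} = \BC(\mc{E}^{\mf{g}}_{\mr{max}})$, where $\mc{E}^{\mf{g}}_{\mr{max}}$ is built exactly as $\mc{E}^\circ_{\mr{max}}$ but using the full Lie algebra $\mf{g}$ (with isocrystal structure from $\Ad(b)$) in place of $\mf{g}^\circ$, and similarly on the abelian side. Using that the push-out along $G \to G^{\ab}$ is the construction inducing $\det_{b,[\mu]}$, and that this push-out is visible on the Lie-algebra-valued tangent spaces as the projection $\mf{g} \to \mf{g}^{\ab}$, I would identify $d\det_{b,[\mu]}$ on Tangent Bundles with $\BC$ applied to the morphism of modifications induced by this Lie-algebra projection.

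Next, I would exploit the $G$-equivariant splitting $\mf{g} = \mf{g}^\circ \oplus \mf{z}$ under the adjoint action: the second summand is acted on trivially, so $\Ad(b)$ restricts to the identity on $\mf{z} \otimes_E \breve{E}$, and the composite $\mf{z} \hookrightarrow \mf{g} \to \mf{g}^{\ab}$ is a $G$-equivariant isomorphism compatible with isocrystal structures. The lattice sum construction defining $\mc{E}_{\mr{max}}$ is therefore additive along $\mf{g} = \mf{g}^\circ \oplus \mf{z}$, so the morphism of modifications is a split surjection of vector bundles with kernel exactly $\mc{E}^\circ_{\mr{max}}$. The split-off summand is (trivial, hence) of slope zero, so $\BC$ preserves the short exact sequence on global sections and I conclude $\ker d\det_{b,[\mu]} = \BC(\mc{E}^\circ_{\mr{max}})$.

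Finally, since $\tau$ is a point of the locally profinite inscribed $v$-sheaf $\M_{\det(b), [\det\circ\mu], \breve{E}([\mu])}$, its tangent bundle vanishes, and the inscribed-fiber formula identifies $T_{\M_{b,[\mu]}^\tau}$ with the pullback of $\ker d\det_{b,[\mu]}$ along $\M_{b,[\mu]}^\tau \to \M_{b,[\mu]}$, completing the identification. The main obstacle will be bookkeeping in the inscribed-geometric formalism: specifically, checking that the push-out description of $\det_{b,[\mu]}$ really does induce the Lie-algebra projection on Tangent Bundles as claimed, and that the operations of taking kernels and inscribed fibers interact with $\BC$ in the expected way. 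No essentially new geometric input should be required beyond the framework already developed in \cite{Howe.InscriptionTwistorsAndPAdicPeriods}.
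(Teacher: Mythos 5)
Your proposal is correct and follows essentially the same route as the paper: the first equality comes from the general inscribed formalism (the fiber over a point of the locally profinite abelianized moduli has tangent bundle the pullback of $\ker d\det_{b,[\mu]}$), and the kernel is identified with $\BC(\mc{E}^\circ_{\mr{max}})$ by applying \cite[Corollary 9.2.3]{Howe.InscriptionTwistorsAndPAdicPeriods} functorially along $\det$, using that $\mf{g}^\circ$ is the kernel of $d\det$ inside $\mf{g} = \mf{g}^\circ \oplus \mf{z}$. The paper leaves the splitting/additivity bookkeeping implicit where you spell it out, but the content is the same.
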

\begin{proof}
We have already explained the first equality. To obtain the description of the kernel, note that \cite[Corollary 9.2.3]{Howe.InscriptionTwistorsAndPAdicPeriods} shows that $T_{\M_{b,[\mu]}}=\BC(\mc{E}_{\mr{max}})$, where $\mc{E}_{\mr{max}}$ is formed by the same construction starting with $\mf{g}$ instead of $\mf{g}^\circ$. The claim about the kernel then follows from the functoriality of the computations in \cite[Corollary 9.2.3]{Howe.InscriptionTwistorsAndPAdicPeriods} since $\mf{g}^\circ$ is the kernel of the derivative of $\det$ in $\mf{g}$. 
\end{proof}

\begin{lemma}\label{lemma.non-negative-slopes}
For any algebraically closed perfectoid $C/\mathbb{C}_p^\flat$ and $z: \Spd C \rightarrow \M_{b,[\mu]}^\tau$, the induced bundle $z^*\mc{E}^\circ_{\mr{max}}$ on $X_{E,C}^\alg$ has non-negative Harder-Narasimhan slopes.
\end{lemma}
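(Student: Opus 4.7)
The plan is to exhibit the trivial bundle $\mf{g}^\circ \otimes_E \mc{O}_{X_{E,C}^\alg}$ as a large subsheaf of $z^*\mc{E}^\circ_{\mr{max}}$ (one that is an isomorphism away from $\infty$) and then to rule out semistable quotients of $z^*\mc{E}^\circ_{\mr{max}}$ of negative slope, using the standard vanishing of global sections for semistable bundles of negative slope on the Fargues-Fontaine curve.

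To set up the subsheaf inclusion, I would observe that the lattice $\mf{g}^{\circ,+}_{\mr{max}}=\mf{g}^\circ_b \otimes_{\breve{E}} \mbb{B}^+_\dR + \mf{g}^\circ \otimes_E \mbb{B}^+_\dR$ visibly contains $\mf{g}^\circ \otimes_E \mbb{B}^+_\dR$, so by the very definition of modification, $\mc{E}^\circ_{\mr{max}}$ receives a canonical inclusion from $\mf{g}^\circ \otimes_E \mc{O}_{\mc{X}}$ whose cokernel is a torsion sheaf supported at $\infty$. Pulling back along $z$ produces an inclusion
\[
\iota\colon \mf{g}^\circ \otimes_E \mc{O}_{X_{E,C}^\alg} \hookrightarrow z^*\mc{E}^\circ_{\mr{max}}
\]
of vector bundles of the same rank on $X_{E,C}^\alg$, with torsion cokernel supported at the image of $\infty$.

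Next, I would reduce the claim to showing that $\Hom(z^*\mc{E}^\circ_{\mr{max}},\mc{G})=0$ for every semistable bundle $\mc{G}$ on $X_{E,C}^\alg$ of negative slope: the minimal Harder-Narasimhan slope of any bundle is realized by the bottom graded piece of its Harder-Narasimhan filtration, which is a semistable quotient, so a negative minimal slope would produce a non-zero morphism to a semistable bundle of negative slope. Given such a $\mc{G}$ and a morphism $\phi\colon z^*\mc{E}^\circ_{\mr{max}} \to \mc{G}$, the composition $\phi\circ\iota$ is a morphism from a trivial bundle to $\mc{G}$, hence is determined by $H^0(X_{E,C}^\alg,\mc{G})$, which vanishes for $\mc{G}$ semistable of negative slope. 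Thus $\phi$ factors through the torsion quotient $z^*\mc{E}^\circ_{\mr{max}}/\iota(\mf{g}^\circ \otimes \mc{O})$, and torsion-freeness of $\mc{G}$ forces $\phi=0$.

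I do not expect a substantive obstacle here: the essential content is that the ``maximal'' lattice is engineered precisely so that the trivial bundle embeds as an iso-away-from-$\infty$ subsheaf, and the standard Fargues-Fontaine vanishing then closes the argument. The only item warranting brief verification is that the modification construction of \cite{Howe.InscriptionTwistorsAndPAdicPeriods} does yield the claimed subsheaf inclusion after pullback by $z$, but this should be formal from the functoriality of the modification setup. Symmetrically, applying the same argument to the bundle formed from the intersection lattice $\mf{g}^\circ \otimes \mbb{B}^+_\dR \cap \mf{g}^\circ_b \otimes \mbb{B}^+_\dR$ would yield $\mu_{\max}\leq 0$ rather than $\mu_{\min}\geq 0$; the choice of ``max'' is precisely what produces the sign in the statement.
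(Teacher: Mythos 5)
Your proposal is correct and is essentially the paper's own argument: both exhibit the trivial bundle $\mf{g}^\circ \otimes_E \mc{O}_{X_{E,C}^\alg}$ inside $z^*\mc{E}^\circ_{\mr{max}}$ with torsion cokernel and then rule out a negative Harder--Narasimhan slope by the nonexistence of nonzero maps from the trivial bundle to a semistable (in the paper, simple $\mc{O}(\lambda)$, $\lambda<0$) bundle of negative slope. Your explicit factorization-through-the-torsion-quotient step just spells out why the restriction to the trivial subbundle is nonzero, which the paper leaves implicit.
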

\begin{proof}
Since $\mf{g}^\circ \otimes_E \mc{O}_{X_{E,C}^\alg}  \subseteq z^*\mc{E}^\circ_{\mr{max}}$ with torsion quotient, its Harder-Narasimhan slopes are non-negative --- indeed, otherwise $z^*\mc{E}^\circ_{\mr{max}}$ admits a nonzero morphism to a simple bundle $\mc{O}(\lambda)$ for $\lambda<0$, which would restrict to a non-zero morphism from the trivial bundle ${\mf{g}^\circ \otimes_E \mc{O}_{X_{E,C}^\alg}}$, but such a morphism does does not exist. 
\end{proof}

It will be the dual bundle that is naturally related to the geometry of fibers of $\pi_1 \times \pi_2$. We write $\mf{g}^{\circ,+}_{\mr{min}}:=\mf{g}^\circ_b \otimes_{\breve{E}} \mbb{B}^+_\dR \cap \mf{g}^\circ \otimes_{E} \mbb{B}^+_\dR$, and let $\mc{E}^{\circ}_{\mr{min}}$ be the associated modification of $\mf{g}^\circ \otimes_E \mc{O}_{\mc{X}}$.

\begin{lemma}\label{lemma.dual-of-e-max}
$(\mc{E}^\circ_{\mr{max}})^* \cong \mc{E}^\circ_{\mr{min}}$.
\end{lemma}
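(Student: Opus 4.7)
The plan is to exhibit $(\mc{E}^\circ_{\mr{max}})^*$ and $\mc{E}^\circ_{\mr{min}}$ as the same modification, leveraging that the adjoint representation $\mf{g}^\circ$ is self-dual. Since $G^\der$ is semisimple and we are in characteristic zero, the Killing form yields a non-degenerate $G$-invariant symmetric pairing $B$ on $\mf{g}^\circ$, hence a $G$-equivariant isomorphism $\mf{g}^\circ \cong (\mf{g}^\circ)^*$. Base-changing and using that $B$ is $\Ad(b)$-invariant, this extends to an isomorphism of isocrystals $\mf{g}^\circ_b \cong (\mf{g}^\circ_b)^*$ compatible with the universal meromorphic identification $\mf{g}^\circ \otimes_E \mbb{B}_\dR = \mf{g}^\circ_b \otimes_{\breve{E}} \mbb{B}_\dR$ (because the underlying modification is an isomorphism of $G$-bundles, hence preserves all $G$-invariant tensors).

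Next I would pass to the lattice level. Writing $L_1 = \mf{g}^\circ \otimes_E \mbb{B}^+_\dR$ and $L_2 = \mf{g}^\circ_b \otimes_{\breve{E}} \mbb{B}^+_\dR$, the perfectness of $B$ over $E$ (resp. $\breve{E}$) implies $L_1^\perp = L_1$ (resp. $L_2^\perp = L_2$), where $\perp$ denotes orthogonal complement in $\mf{g}^\circ \otimes_E \mbb{B}_\dR$ with respect to the $\mbb{B}_\dR$-bilinear extension of $B$. The elementary identity $(L_1 + L_2)^\perp = L_1^\perp \cap L_2^\perp$ for lattices under a perfect pairing then gives $(\mf{g}^{\circ,+}_{\mr{max}})^\perp = L_1 \cap L_2 = \mf{g}^{\circ,+}_{\mr{min}}$.

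Finally I would translate back to bundles: dualizing the modification of a trivial bundle $V \otimes_E \mc{O}_\mc{X}$ determined by a lattice $L \subseteq V \otimes_E \mbb{B}_\dR$ produces the modification of $V^* \otimes_E \mc{O}_\mc{X}$ determined by the dual lattice $L^\vee \subseteq V^* \otimes_E \mbb{B}_\dR$. Applying this with $V = \mf{g}^\circ$ and $L = \mf{g}^{\circ,+}_{\mr{max}}$, and substituting the Killing-form identification $V^* \cong V$, one obtains that $(\mc{E}^\circ_{\mr{max}})^*$ is the modification of $\mf{g}^\circ \otimes_E \mc{O}_\mc{X}$ by $(\mf{g}^{\circ,+}_{\mr{max}})^\perp = \mf{g}^{\circ,+}_{\mr{min}}$, which is exactly $\mc{E}^\circ_{\mr{min}}$. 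The only real obstacle is bookkeeping in the first step: one must check that the two manifestations of the pairing — via the $E$-structure on $\mf{g}^\circ$ and via the $\breve{E}$-isocrystal structure on $\mf{g}^\circ_b$ — coincide under the meromorphic identification, so that the assertion $L_2^\perp = L_2$ is unambiguous inside $\mf{g}^\circ \otimes_E \mbb{B}_\dR$. This follows from the $G$-invariance of $B$ but is worth spelling out.
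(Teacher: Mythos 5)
Your proof is correct and follows essentially the same route as the paper: dualizing the modification dualizes the lattice, the dual of a sum of lattices is the intersection of the duals, and the Killing form gives the $G$-equivariant self-duality of $\mf{g}^\circ$ (compatible with the isocrystal structure and the meromorphic identification) that converts this intersection into $\mf{g}^{\circ,+}_{\mr{min}}$. The only difference is organizational — you phrase everything via orthogonal complements for the Killing pairing, while the paper first passes to the abstract dual and then applies the Killing-form identification — which is immaterial.
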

\begin{proof}
We have 
\[ (\mf{g}^{\circ,+}_{\max})^*=(\mf{g}^\circ_b \otimes_{\breve{E}} \mbb{B}^+_\dR + \mf{g}^\circ \otimes_{E} \mbb{B}^+_\dR)^* = (\mf{g}^\circ_b)^*\otimes_{\breve{E}} \mbb{B}^+_\dR \cap (\mf{g}^\circ)^* \otimes_{E} \mbb{B}^+_\dR. \]
It follows that $(\mc{E}^\circ_{\mr{max}})^*$ is the modification of $(\mf{g}^\circ)^* \otimes_E \mc{O}_{\mc{X}}$ associated to the lattice \[ (\mf{g}^\circ_b)^*\otimes_{\breve{E}} \mbb{B}^+_\dR \cap (\mf{g}^\circ)^* \otimes_{E} \mbb{B}^+_\dR.\]
Since $\mf{g}^\circ$ is a semisimple lie algebra, the Killing form gives an isomorphism $\mf{g}^\circ \cong (\mf{g}^\circ)^*$ as a representation of $G$. Thus we obtain an isomorphism of this modification with the modification of $\mf{g}^\circ \otimes_E \mc{O}_{\mc{X}}$ associated to the lattice $\mf{g}^\circ_b\otimes_{\breve{E}} \mbb{B}^+_\dR \cap \mf{g}^\circ \otimes_{E} \mbb{B}^+_\dR = \mf{g}^{\circ,+}_{\mr{\mathrm{min}}}$, i.e. $\mc{E}^\circ_{\mr{min}}.$ 
\end{proof}

\begin{remark}
\cref{lemma.dual-of-e-max}  will not typically hold if $G$ is not reductive; this is the only reason that we work with $G$ reductive instead of a general connected linear algebraic group. 
\end{remark}

\subsection{Fibers of $\pi_\Hdg^{+,\tau} \times \pi_\HT^{+,\tau}$}
We now relate our compuation to the structures of the fibers of $\pi^\tau:=\pi_{\Hdg}^{+,\tau} \times \pi_{\HT}^{+,\tau}$. 

\begin{lemma}\label{lemma.tau-torsor}
$\pi_\Hdg^{+,\tau}$ is a $G^\der(E^\lfid)$-torsor. 
\end{lemma}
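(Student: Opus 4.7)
The plan is to deduce this directly from the inscribed enhancement of the torsor property of $\pi_\Hdg^+$ recalled in the introduction, by taking fibers along $\det_{b,[\mu]}$ and using that $G^\der$ is the kernel of $\det:G\to G^\ab$. The two ingredients I would take from \cite{Howe.InscriptionTwistorsAndPAdicPeriods} are: (i) at the inscribed level, $\pi_\Hdg^+:\M_{b,[\mu]}\to \Gr_{[\mu],\breve{E}([\mu])}$ is a $G(E^\lfid)$-torsor over its image; and (ii) the inscribed moduli-of-sections construction is functorial in the group data, so that the $G(E^\lfid)$-action on $\M_{b,[\mu]}$ intertwines via $\det$ with the $G^\ab(E^\lfid)$-action on $\M_{\det(b),[\det\mu]}$. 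In particular $\det_{b,[\mu]}$ is $G(E^\lfid)$-equivariant through the surjection $\det:G(E^\lfid)\twoheadrightarrow G^\ab(E^\lfid)$.

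Once this is in place, the subgroup $G^\der(E^\lfid)=\ker(\det:G(E^\lfid)\to G^\ab(E^\lfid))$ preserves $\M_{b,[\mu]}^\tau=\det_{b,[\mu]}^{-1}(\tau)$, making $\pi_\Hdg^{+,\tau}$ a $G^\der(E^\lfid)$-equivariant map. Simple transitivity on fibers is then formal: given a test object $S$ and $x,x'\in \M_{b,[\mu]}^\tau(S)$ with $\pi_\Hdg^{+,\tau}(x)=\pi_\Hdg^{+,\tau}(x')$, input (i) produces a unique $g\in G(E^\lfid)(S)$ with $g\cdot x=x'$, and the equivariance then forces $\det(g)\cdot \tau=\tau$, hence $\det(g)=1$ and $g\in G^\der(E^\lfid)(S)$. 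Combined with the nonemptiness of $\M_{b,[\mu]}^\tau$ built into the standing assumption on $\tau$, this yields the torsor statement on the image.

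The only real obstacle is assembling the inscribed enhancements (i) and (ii) cleanly from \cite{Howe.InscriptionTwistorsAndPAdicPeriods}; everything else is formal kernel/quotient manipulation, and no new geometric input about $\M_{b,[\mu]}$ is needed beyond what was already used to construct the $G(E^\lfid)$-torsor structure on $\pi_\Hdg^+$.
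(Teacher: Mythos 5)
Your proposal is correct and is essentially the paper's own argument: both combine the $G(E^\lfid)$-torsor property of $\pi_\Hdg^{+}$ with the $\det$-equivariance of $\det_{b,[\mu]}$ and the fact that $\M_{\det(b),[\det\circ\mu]}$ is a $G^\ab(E^\lfid)$-torsor, concluding that the fibers of $\pi_\Hdg^{+,\tau}$ are torsors under the kernel $G^\der(E^\lfid)$. The only point worth flagging is that your step ``$\det(g)\cdot\tau=\tau$, hence $\det(g)=1$'' is precisely where the freeness (torsor) property of the $G^\ab(E^\lfid)$-action on the determinant space is used, which the paper makes explicit by noting its period map is a torsor over a point since the Schubert cells for the abelian quotient are points.
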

\begin{proof}
Since $\pi_{\Hdg}^+$ is a $G(E^\lfid)$-torsor and the associated period map for $\mc{M}_{\det(b),\det([\mu])}$ is a $G^\mr{ab}(E^\lfid)$-torsor (over a point, since the group is abelian so the Schubert cells are all points), we find the fibers of $\pi_{\Hdg}^{+,\tau}$ are torsors for the kernel $G^\der(E^\lfid)$ of $G(E^\lfid) \rightarrow G^\mr{ab}(E^\lfid)$.  
\end{proof}

For $z: \Spd(C) \rightarrow \M_{b,[\mu]}^\tau$ a rank one geometric point, we write
\[ F_z:= \pi^\tau \times_{\Gr_{[\mu]} \times \Gr_{[\mu^{-1}]}} (\pi^\tau \circ z). \]
In other words, $F_z$ is the intersection of the fibers of $\pi_\Hdg^{+,\tau}$ and $\pi_{\HT}^{+,\tau}$ that contain $z$, equipped with its natural inscribed structure. 

Because $\mc{E}^\circ_{\mr{min}} \subseteq \mc{E}^\circ_{\mr{max}}$, there is a natural inclusion $\BC(\mc{E}^\circ_{\mr{min}}) \hookrightarrow \BC(\mc{E}^\circ_{\mr{max}}).$ 

\begin{lemma}\label{lemma.min-tangent-fiber}
The isomorphism $T_{\M_{b,[\mu]}^\tau}=\BC(\mathcal{E}^\circ_{\mr{max}})$ restricts to an isomorphism $T_{F_{z}}=\BC(\mathcal{E}^\circ_{\mr{min}}).$
\end{lemma}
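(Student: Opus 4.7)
The plan is to identify $T_{F_z}$ as the kernel of $d\pi^\tau$ restricted to $T_{\M_{b,[\mu]}^\tau}$, and then compute this kernel concretely using the functoriality of the tangent bundle computations from \cite{Howe.InscriptionTwistorsAndPAdicPeriods}. Since $F_z$ is by construction the fiber of $\pi^\tau = \pi_\Hdg^{+,\tau}\times \pi_\HT^{+,\tau}$ through $z$, its tangent bundle as an inscribed $v$-sheaf is the kernel of the induced map of tangent bundles
\[ d\pi^\tau: T_{\M_{b,[\mu]}^\tau} \longrightarrow (\pi^\tau)^* T_{\Gr_{[\mu],\mbb{C}_p}} \oplus (\pi^\tau)^* T_{\Gr_{[\mu^{-1}]},\mbb{C}_p}. \]
Thus the task reduces to identifying this kernel with $\BC(\mc{E}^\circ_{\mr{min}})$ inside $\BC(\mc{E}^\circ_{\mr{max}}) = T_{\M_{b,[\mu]}^\tau}$ (via \cref{lemma.tangent-tau}).

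Next I would unpack the derivatives of the two period maps using the same inscribed formalism used in \cite[\S9.4]{Howe.InscriptionTwistorsAndPAdicPeriods} to compute $T_{\M_{b,[\mu]}}$. The Schubert cell $\Gr_{[\mu]}$ parameterizes lattice positions relative to $\mf{g}\otimes_E \mbb{B}^+_\dR$, so its tangent bundle (pulled back to $\M_{b,[\mu]}^\tau$) should be identified with the $\BC$ of the modification of $\mf{g}^\circ\otimes_E\mc{O}_\mc{X}$ by $\mf{g}^\circ_b\otimes_{\breve{E}}\mbb{B}^+_\dR/\mf{g}^{\circ,+}_{\mr{min}}$ (the piece of the max-lattice not already visible in $\mf{g}^\circ\otimes\mbb{B}^+_\dR$), and the derivative $d\pi_\Hdg^{+,\tau}$ should be the natural quotient map. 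Symmetrically, $d\pi_\HT^{+,\tau}$ should be the quotient by the piece coming from $\mf{g}^\circ\otimes_E\mbb{B}^+_\dR/\mf{g}^{\circ,+}_{\mr{min}}$. Both statements are applications of the same general recipe by which the tangent bundle of a moduli space of modifications decomposes according to the two lattices being compared, and they are the $\tau$-fiber versions of the constructions in loc. cit.

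With those identifications in hand, the kernel of $d\pi^\tau$ at the level of $\mbb{B}^+_\dR$-lattices is exactly $\mf{g}^\circ_b\otimes_{\breve{E}} \mbb{B}^+_\dR \cap \mf{g}^\circ\otimes_E \mbb{B}^+_\dR = \mf{g}^{\circ,+}_{\mr{min}}$, and the corresponding modification of $\mf{g}^\circ\otimes_E\mc{O}_\mc{X}$ is precisely $\mc{E}^\circ_{\mr{min}}$. Applying $\BC(-)$ and comparing to the inclusion $\BC(\mc{E}^\circ_{\mr{min}}) \hookrightarrow \BC(\mc{E}^\circ_{\mr{max}})$ induced by $\mc{E}^\circ_{\mr{min}}\subseteq\mc{E}^\circ_{\mr{max}}$ yields the desired isomorphism $T_{F_z} = \BC(\mc{E}^\circ_{\mr{min}})$.

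The main obstacle is the middle step: ensuring that the derivatives of the two period maps really are the projections induced by the two summands of the max-lattice, as expected from the lattice picture. This is morally a direct specialization of the functorial computation in \cite[\S9.4]{Howe.InscriptionTwistorsAndPAdicPeriods} — the period maps are themselves moduli-of-sections/modification maps — but it requires carefully matching conventions (left vs.\ right lattice, Hodge vs.\ Hodge–Tate, and the passage from $\mf{g}$ to $\mf{g}^\circ$ via the fiber over $\tau$). Once that matching is spelled out, the rest is purely a matter of intersecting two sub-$\mbb{B}^+_\dR$-modules.
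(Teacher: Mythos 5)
Your proposal is correct and takes essentially the same route as the paper: identify $T_{F_z}$ with the restriction of $\ker d\pi^\tau$, invoke the computation of the derivatives of the period maps from \cite[Corollary 9.2.3, \S9.4]{Howe.InscriptionTwistorsAndPAdicPeriods} to see that the kernels of $d\pi_{\Hdg}^{+,\tau}$ and $d\pi_{\HT}^{+,\tau}$ are the intersections with $\mf{g}^\circ \otimes_E \mbb{B}^+_\dR$ and $\mf{g}^\circ_b \otimes_{\breve{E}} \mbb{B}^+_\dR$ respectively, and intersect to land on $\mf{g}^{\circ,+}_{\mr{min}}$, i.e. $\BC(\mc{E}^\circ_{\mr{min}})$. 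Your description of the Schubert cells' tangent spaces as ``modifications by'' quotient lattices is phrased loosely (they are Banach--Colmez spaces of skyscraper quotients), but the substance of the kernel computation matches the paper's argument.
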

\begin{proof}
    The tangent bundle $T_{F_z}$ is the restriction to $F_z$ of the kernel of $d \pi^{\tau} = d\pi_{\Hdg}^{+,\tau} \times d\pi_{\HT}^{+,\tau}$. From the computation of the derivatives of the period maps in \cite[Corollary 9.2.3]{Howe.InscriptionTwistorsAndPAdicPeriods} (see also the corresponding diagram in \cite[\S9.4]{Howe.InscriptionTwistorsAndPAdicPeriods}), this is exactly $\BC(\mathcal{E}^\circ_{\mr{min}})$: indeed, we have
        \[ \BC(\mc{E}^\circ_{\max}) \subseteq \mf{g} \otimes_E \mbb{B}_\dR = \mf{g}_b \otimes_{\breve{E}} \mbb{B}_\dR \]
    and the kernel of $d\pi_{\Hdg}^{+,\tau}$ will be exactly the intersection of 
       with $\mf{g} \otimes_E \mbb{B}^+_\dR$ while the kernel of $d\pi_{\HT}^{+,\tau}$ will be exactly the intersection with $\mf{g}^\circ_b\otimes_{\breve{E}} \mbb{B}^+_\dR$, so that together we obtain the intersection with $\mf{g}^{\circ,+}_{\mr{min}}$, which is $\BC(\mc{E}^\circ_{\min})$.  
\end{proof}

We note that $F_z \hookrightarrow \pi_{\Hdg}^{+,\tau} \times_{\Gr_{[\mu]}}\pi_{\Hdg}^{+,\tau}\circ z$. Since $\pi_\Hdg^{+,\tau}$ is a $G^\der(E^\lfid)$-torsor by \cref{lemma.tau-torsor}, we obtain an embedding over $\Spd C$, $F_z \hookrightarrow G^\der(E^\lfid) \cdot z$. In fact we can be more precise:

\begin{lemma}\label{lemma.fiber-orbit} For $H_z:= \mathrm{Stab}_{G^\der(E^\lfid)}(\pi_\HT^{+,\tau}(z))$,  $F_z = H_z \cdot z. $
\end{lemma}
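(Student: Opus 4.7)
The plan is to identify $F_z$ with a subset of the $G^\der(E^\lfid)$-orbit of $z$ by combining the torsor structure of $\pi_\Hdg^{+,\tau}$ with the natural equivariance of $\pi_\HT^{+,\tau}$, and then to recognize the resulting subset as the stabilizer.

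First I would invoke \cref{lemma.tau-torsor}: since $\pi_\Hdg^{+,\tau}$ is a $G^\der(E^\lfid)$-torsor, after pulling back to $\Spd C$ the orbit map $g \mapsto g\cdot z$ yields an isomorphism of inscribed $v$-sheaves
\[ G^\der(E^\lfid)_{\Spd C} \xrightarrow{\sim} (\pi_\Hdg^{+,\tau})^{-1}(\pi_\Hdg^{+,\tau}(z)). \]
Under this trivialization, the embedding $F_z \hookrightarrow (\pi_\Hdg^{+,\tau})^{-1}(\pi_\Hdg^{+,\tau}(z))$ noted just above the lemma realizes $F_z$ as the pullback of $\pi_\HT^{+,\tau}(z)$ under the composition $G^\der(E^\lfid)_{\Spd C} \to \M_{b,[\mu]}^\tau \xrightarrow{\pi_\HT^{+,\tau}} \Gr_{[\mu^{-1}],\mathbb{C}_p}$.

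Next I would use the $G(E^\lfid)$-equivariance of $\pi_\HT^{+,\tau}$ (for the natural left action of $G$ on $\Gr_{[\mu^{-1}]}$), which is part of the construction recalled in the introduction and in \cite[\S9.4]{Howe.InscriptionTwistorsAndPAdicPeriods}, restricted to $G^\der(E^\lfid) \leq G(E^\lfid)$. The displayed composition above then becomes $g \mapsto g\cdot \pi_\HT^{+,\tau}(z)$, whose fiber over $\pi_\HT^{+,\tau}(z)$ is by definition $H_z \subseteq G^\der(E^\lfid)$. Transporting back through the orbit map gives $F_z = H_z\cdot z$.

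The only real subtlety — and the point I would be most careful about — is to check that this identification is respected at the level of inscribed $v$-sheaves and not merely of underlying diamonds. But the torsor isomorphism of \cref{lemma.tau-torsor} already takes place in the inscribed category, the equivariance of $\pi_\HT^{+,\tau}$ is inscribed by construction, and $H_z$ is closed in $G^\der(E^\lfid)$ with its induced inscribed structure matching the one it inherits as a fiber product of inscribed $v$-sheaves. Everything else is formal from functoriality, so no genuine obstacle is expected beyond bookkeeping.
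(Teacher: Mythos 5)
Your argument is correct and is essentially the paper's own proof: both identify the $\pi_\Hdg^{+,\tau}$-fiber through $z$ with the $G^\der(E^\lfid)$-orbit of $z$ via \cref{lemma.tau-torsor}, and then use the equivariance of $\pi_\HT^{+,\tau}$ (so that $g\cdot z \mapsto g\cdot \pi_\HT^{+,\tau}(z)$) to cut out exactly the stabilizer orbit $H_z\cdot z$. The extra care you take about the inscribed structure is fine but amounts to the same bookkeeping the paper treats as immediate from the definitions.
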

\begin{proof} The induced map $(\pi^{+,\tau}_\Hdg)^{-1}(z)=G^\der(E^\lfid) \cdot z \xrightarrow{\pi_{\HT}^{+,\tau}} \Gr_{[\mu^{-1}]}$ sends $g \cdot z$ to $g \cdot \pi_\HT^{+,\tau}(z)$ (by the equivariance of $\pi_{\HT}^{+,\tau}$), so the claim is immediate from the definitions.
\end{proof}

\begin{lemma}\label{lemma.discrete-lie-translation}
$H_z(\Spd C) \leq G(E)$ is discrete if and only if $\Lie H_z(\Spd C) = 0$.
\end{lemma}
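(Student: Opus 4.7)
The plan is to invoke the closed subgroup theorem for $p$-adic Lie groups: a closed subgroup of a $p$-adic Lie group is itself a $p$-adic Lie subgroup, and is discrete precisely when its classical Lie algebra vanishes. First I would verify that $H_z(\Spd C)$ is a closed subgroup of $G^\der(E)$ equipped with the $p$-adic analytic topology. This follows because the orbit map $G^\der(E) \to \Gr_{[\mu^{-1}]}(C)$, $g \mapsto g \cdot \pi_\HT^{+,\tau}(z)$, is continuous (the action of $G^\der(E^\lfid)$ on the Schubert cell is continuous on the underlying $v$-sheaf, and the restriction to $\Spd C$-points recovers the $p$-adic analytic topology on $G^\der(E)$), and $H_z(\Spd C)$ is the preimage of a single point.

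The crucial step is to identify the inscribed Lie algebra $\Lie H_z(\Spd C)$, defined via the inscribed tangent space of $H_z$ at the identity, with the classical Lie algebra of the closed subgroup $H_z(\Spd C) \subseteq G^\der(E)$ in the sense of $p$-adic Lie theory. This comes down to unwinding the inscribed structure on $G^\der(E^\lfid)$ from \cite{Howe.InscriptionTwistorsAndPAdicPeriods}: the inscribed tangent space of $G^\der(E^\lfid)$ at the identity recovers $\mf{g}^\der = \Lie G^\der(E)$, and the inscribed tangent space at the identity of a closed inscribed subgroup sits inside $\mf{g}^\der$ as the classical Lie subalgebra of the corresponding closed $p$-adic Lie subgroup. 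Granted this compatibility, the closed subgroup theorem gives both implications: the dimension of $H_z(\Spd C)$ as a $p$-adic Lie subgroup of $G^\der(E)$ equals $\dim_{\qp} \Lie H_z(\Spd C)$, and a $p$-adic Lie group is discrete exactly when it is zero-dimensional.

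The forward direction could also be seen more directly without appealing to the closed subgroup theorem: if $H_z(\Spd C)$ is discrete, then no nonzero inscribed tangent vector at the identity can integrate to a curve in $H_z$, so $\Lie H_z(\Spd C) = 0$. The main obstacle I expect is the reverse implication together with the identification of the two notions of Lie algebra, since one must pass carefully between the inscribed perspective (where tangent spaces are $v$-sheaf theoretic) and the classical $p$-adic analytic perspective. I anticipate that this should be a formal but non-trivial bookkeeping argument following the conventions of \cite{Howe.InscriptionTwistorsAndPAdicPeriods}, most cleanly executed by realizing $H_z$ locally near the identity as the image of $\exp$ applied to its Lie algebra and checking that this matches the exponential construction implicit in the inscribed structure on $G^\der(E^\lfid)$.
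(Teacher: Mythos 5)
Your proposal is correct and follows essentially the same route as the paper: the paper also observes that $H_z(\Spd C)$ is the stabilizer $\mr{Stab}_{G^\der(E)}(\pi_\HT^{+,\tau}(z))$, hence a closed subgroup of $G^\der(E)$ and thus a $p$-adic Lie group, which is discrete exactly when its Lie algebra vanishes, and it bridges the inscribed and classical notions of Lie algebra by exponentiation of tangent vectors, just as you suggest. Your write-up merely spells out the closedness check and the $\exp$-compatibility bookkeeping that the paper leaves implicit.
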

\begin{proof}
By construction, 
\[ H:=H_z(\Spd C) = \mr{Stab}_{G^\der(E)} (\pi_\HT^{+,\tau}(z)) \]
Since $H$ is a closed subgroup of $G^\der(E)$, it is a $p$-adic Lie group. Thus it is discrete if and only if $\Lie H=0$. By exponentiation of tangent vectors in a $p$-adic Lie group, we find this is equivalent to $\Lie H_z(\Spd C)=0$. 
\end{proof}

\begin{proposition}\label{prop.HN-slopes-condition}
For $z: \Spd(C) \rightarrow \M_{b,[\mu]}^\tau$, we write $z^*\mc{E}^\circ_{\mr{max}}$ for the induced bundle on $X_{E, C^\flat}^\alg$. The Harder-Narasimhan slopes of $z^*\mc{E}^\circ_{\mr{max}}$ are all non-negative, and the following are equivalent: 
\begin{enumerate}
\item $z^*\mc{E}^\circ_{\mr{max}}$ does not admit zero as a Harder-Narasimhan slope.
\item $z^*\mc{E}^\circ_{\mr{min}}$ does not admit zero as a Harder-Narasimhan slope. 
\item $T_{F_z, z}(\Spd C) = 0$.
\item $(\Lie H_z)(\Spd C) = 0$
\item $H_z(\Spd C)=\mr{Stab}_{G^\der(E)} (\pi_{\HT}^{+,\tau}(z))$ is a discrete subgroup of $G^\der(E)$. 
\item $(F_z)(\Spd C)$ is discrete subset of $G^\der(E) \cdot z$. 
\end{enumerate}
\end{proposition}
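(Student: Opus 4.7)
The plan is to prove the chain of equivalences $(1) \Leftrightarrow (2) \Leftrightarrow (3) \Leftrightarrow (4) \Leftrightarrow (5) \Leftrightarrow (6)$; the non-negativity assertion itself is supplied by \cref{lemma.non-negative-slopes}.

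For the slope-side equivalences $(1) \Leftrightarrow (2) \Leftrightarrow (3)$, I would first use the duality identification $\mc{E}^\circ_{\mr{min}} \cong (\mc{E}^\circ_{\mr{max}})^*$ of \cref{lemma.dual-of-e-max}: pulling back along $z$ and invoking the fact that dualizing a vector bundle on the Fargues-Fontaine curve negates Harder-Narasimhan slopes shows that zero occurs as a slope of $z^*\mc{E}^\circ_{\mr{max}}$ if and only if it occurs as a slope of $z^*\mc{E}^\circ_{\mr{min}}$, and combined with \cref{lemma.non-negative-slopes} this also yields that $z^*\mc{E}^\circ_{\mr{min}}$ has non-positive Harder-Narasimhan slopes. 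Then \cref{lemma.min-tangent-fiber} identifies $T_{F_z,z}(\Spd C)$ with $H^0(X_{E,C^\flat}^\alg, z^*\mc{E}^\circ_{\mr{min}})$, and since a vector bundle on the Fargues-Fontaine curve with non-positive slopes has vanishing global sections exactly when all slopes are strictly negative (by the Fargues-Fontaine classification together with $H^0(\mc{O}(\lambda)) = 0$ iff $\lambda < 0$), this establishes $(2) \Leftrightarrow (3)$.

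The remaining equivalences $(3) \Leftrightarrow (4) \Leftrightarrow (5) \Leftrightarrow (6)$ flow from the orbit description $F_z = H_z \cdot z$ of \cref{lemma.fiber-orbit}. Since $\pi_\Hdg^{+,\tau}$ is a $G^\der(E^\lfid)$-torsor by \cref{lemma.tau-torsor}, the orbit map $h \mapsto h \cdot z$ realizes $F_z$ as a free homogeneous space under $H_z$; taking tangent spaces at the identity yields a canonical identification $\Lie H_z = T_{F_z,z}$, which evaluates on $\Spd C$-points to give $(3) \Leftrightarrow (4)$. The equivalence $(4) \Leftrightarrow (5)$ is precisely \cref{lemma.discrete-lie-translation}, and $(5) \Leftrightarrow (6)$ also follows from the orbit description since $H_z(\Spd C)$ acts simply transitively on $F_z(\Spd C)$, so one set is discrete in $G^\der(E)$ if and only if the other is discrete in $G^\der(E) \cdot z$.

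I expect the most delicate point to be the identification $\Lie H_z = T_{F_z,z}$ entering $(3) \Leftrightarrow (4)$, where one must carefully invoke the inscribed-$v$-sheaf tangent formalism of \cite{Howe.InscriptionTwistorsAndPAdicPeriods} to ensure that the inscribed tangent space of the torsor orbit at $z$ really agrees with the Lie algebra of the stabilizer group diamond $H_z$ at the identity; the slope-side implications, by contrast, should go through cleanly once the duality \cref{lemma.dual-of-e-max} and the Fargues-Fontaine classification are in hand.
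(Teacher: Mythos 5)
Your proposal is correct and follows essentially the same route as the paper: non-negativity from \cref{lemma.non-negative-slopes}, duality via \cref{lemma.dual-of-e-max} for (1)$\Leftrightarrow$(2), the global-sections criterion plus \cref{lemma.min-tangent-fiber} for (2)$\Leftrightarrow$(3), and the orbit description of \cref{lemma.fiber-orbit} together with \cref{lemma.discrete-lie-translation} for (3)--(6). The point you flag as delicate, $T_{F_z,z}=\Lie H_z$, is handled in the paper exactly as you suggest, as a direct consequence of \cref{lemma.fiber-orbit}.
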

\begin{proof}
The non-negativity of slopes is \cref{lemma.non-negative-slopes}. 

The bundle $z^*(\mc{E}^\circ_{\mr{max}})$ admits zero as a Harder-Narasimhan slope if and only if its dual does, and this dual is equal to $z^*\mc{E}^\circ_{\mr{min}}$ by \cref{lemma.dual-of-e-max}, so we obtain the equivalence between (1) and (2).

Since $z^*\mc{E}^\circ_{\mr{min}}$ is the dual of a bundle with non-negative slopes, its slopes are non-positive, thus it admits zero as a slope if and only if it has a non-zero global section. Invoking \cref{lemma.min-tangent-fiber}, this gives the equivalence between (2) and (3).

By \cref{lemma.fiber-orbit}, we obtain $T_{F_z,z}=\Lie H_z$, which gives the equivalence between (3) and (4). The equivalence between (4) and (5) follows from \cref{lemma.discrete-lie-translation}. Finally, the equivalence between (5) and (6) is again a consequence of \cref{lemma.fiber-orbit}.
\end{proof}

Following the definition in the introduction, a geometric point $z$ is called very special if it does not satisfy any of the equivalent conditions of \cref{prop.HN-slopes-condition}.

\begin{remark}\label{remark.connected}
    The Banach-Colmez space of global sections of $z^*\mc{E}^\circ_{\mr{max}}$ appearing in \cref{prop.HN-slopes-condition}, i.e. the Tangent Space, is connected if and only if zero does not appear as a slope. Thus this can also be viewed as a criterion for connectedness of the Tangent Space. 
\end{remark}

\begin{corollary}\label{cor.vsp-closed}
    There is a unique closed subdiamond $\overline{\M_{b,[\mu]}^{\tau}}^{\vsp} \subseteq \overline{\M_{b,[\mu]}^{\tau}}$ whose rank one geometric points are exactly the very special points. 
\end{corollary}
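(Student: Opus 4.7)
The plan is to identify $\overline{\M_{b,[\mu]}^{\tau}}^{\vsp}$ as the closed subdiamond cut out by a Harder--Narasimhan semi-continuity condition on the universal vector bundle $\mc{E}^\circ_{\mr{max}}$ over the underlying $v$-sheaf $\overline{\M_{b,[\mu]}^\tau}$. By \cref{prop.HN-slopes-condition}(1) together with \cref{lemma.non-negative-slopes}, a rank one geometric point $z$ is very special if and only if the minimum Harder--Narasimhan slope of $z^*\mc{E}^\circ_{\mr{max}}$ equals zero, as opposed to being strictly positive.

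To show this locus is closed, I would invoke upper semi-continuity of HN polygons in $v$-families of vector bundles on the relative Fargues--Fontaine curve (available in the $v$-sheaf setting via \cite{FarguesScholze.GeometrizationOfTheLocalLanglandsCorrespondence}). The bundle $\mc{E}^\circ_{\mr{max}}$ has constant rank $\dim G^\der$ and constant degree (the latter determined by $[\mu]$ via the adjoint representation), and all fibers have non-negative slopes by \cref{lemma.non-negative-slopes}. Hence only finitely many HN polygons can occur in the family. Semi-continuity asserts that the locus where the polygon lies above a fixed polygon is closed. Choosing $\epsilon>0$ strictly smaller than every strictly positive minimum slope appearing in the (finite) list of polygons, the condition ``minimum slope $\geq \epsilon$'' is open, so its complement ``minimum slope $=0$'' is a closed subset of $|\overline{\M_{b,[\mu]}^\tau}|$, which by the paragraph above agrees with the very special locus on rank one geometric points.

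The corresponding closed subdiamond $\overline{\M_{b,[\mu]}^{\tau}}^{\vsp}$ exists and is unique because closed sub-$v$-sheaves of a diamond are in bijection with closed subsets of the underlying topological space, and such a subset is determined by its rank one geometric points. Equivalently, one may use \cref{lemma.dual-of-e-max} to phrase the very special condition as non-vanishing of $H^0$ of $z^*\mc{E}^\circ_{\mr{min}}$ (cf.\ \cref{lemma.min-tangent-fiber}) and recover closedness from semi-continuity of the relative Banach--Colmez family $\BC(\mc{E}^\circ_{\mr{min}})$; this gives an alternative route via the closedness of the image of $\BC(\mc{E}^\circ_{\mr{min}}) \setminus \{0\} \to \overline{\M_{b,[\mu]}^\tau}$. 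The main obstacle I expect is cleanly invoking HN semi-continuity in the inscribed $v$-sheaf setting at hand; this should reduce to standard semi-continuity on the underlying diamond, but requires care in citing the precise formulation.
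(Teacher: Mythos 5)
Your proposal is correct and follows essentially the same route as the paper: by \cref{prop.HN-slopes-condition} the very special points are exactly those where $z^*\mc{E}^\circ_{\mr{max}}$ admits $0$ as a Harder--Narasimhan slope, and since the slopes are non-negative (\cref{lemma.non-negative-slopes}) this is a closed condition by semi-continuity of the HN polygon, the paper citing \cite[Theorem I.3.4]{FarguesScholze.GeometrizationOfTheLocalLanglandsCorrespondence}. Your worry about the inscribed setting is unnecessary, since the statement only concerns the underlying $v$-sheaf $\overline{\M_{b,[\mu]}^{\tau}}$, where that semi-continuity result applies directly; your extra finiteness-of-polygons/$\epsilon$ step and the remark on closed sub-$v$-sheaves are just more detailed versions of the paper's one-line conclusion.
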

\begin{proof}
    We first note that, by \cref{prop.HN-slopes-condition}, a point $z: \Spd C \rightarrow \M_{b,[\mu]}^\tau$ is very special if and only if $\mc{E}^\circ_{\max,z}$ admits $0$ as a Harder-Narasimhan slope. Since the slopes are non-negative, this is a closed condition by the semi-continuity of the Harder-Narasimhan polygon  (\cite[Theorem I.3.4]{FarguesScholze.GeometrizationOfTheLocalLanglandsCorrespondence}).
\end{proof}

We write $\overline{\M_{b,[\mu]}^{\tau}}^{\nvsp}$ for the open complement of the very special locus. 

\section{The EL case}\label{s.EL-case}

In this section we will prove \cref{theorem.minuscule-EL} (see \cref{corollary.non-sp-locus}). 

\subsection{Setup}
In this section $E=\mathbb{Q}_p$. Following \cite[\S2.2]{IvanovWeinstein.TheSmoothLocusInInfiniteLevelRapoportZinkSpaces} (with a slight modification --- see \cref{remark.hodge-cocharacter-change}), we fix EL data $\mathcal{D}=(B,V,H,[\mu])$: $B$ is a semisimple $\mathbb{Q}_p$-algebra, $V$ is a finite dimensional $B$-module, $H$ is a $p$-divisible group up to isogeny over $\fpbar$ equipped with an action $B \hookrightarrow \End(H)$, and $[\mu]$ is a conjugacy class of cocharacters of $G_{\overline{\mathbb{Q}}_p}$ for $G:=\GL_B(V)$ such that 
\begin{enumerate}
\item The rational covariant Dieudonn\'{e} module $M(H)$, as a $B \otimes_{\qp} \qpbreve$-module, is isomorphic to $V \otimes \qpbreve$
\item The action on $V \otimes \qpbar$ of $\mathbb{G}_m$ induced by any $\mu \in [\mu]$ is by weights $0$ and $1$, and the weight $1$ space has dimension equal to that of $H$.
\end{enumerate}
\begin{remark}\label{remark.hodge-cocharacter-change}
In \cite{IvanovWeinstein.TheSmoothLocusInInfiniteLevelRapoportZinkSpaces} they also take the cocharacter to have weights $0$ and $1$, but require the weight $0$ subspace have dimension equal to that of $H$. We have chosen our $\mu$ as the homological Hodge cocharacter, i.e. to lie in the conjugacy class that would act as multiplication by $z$ on the Lie algebra of the associated lift of $H$ (note that for the Hodge filtration on $M(H)$ associated to a lift, $\gr^{-1}M(H)$ is the Lie algebra; the Hodge filtration is thus the filtration attached to $\mu^{-1}$). 
\end{remark}

We fix an isomorphism as in (1), so that the Frobenius on $M(H)=V \otimes \qpbreve$ is of the form $b \sigma$ for $b \in \GL_B(V)$. Thus we can consider the inscribed moduli of modifications $\M_{b,[\mu]}$ over $\qpbreve([\mu])$ as in \cref{s.analysis}. Recall that in this minuscule case, $\pi_\HT=\pi_{\HT}^+$ and $\pi_{\Hdg}=\pi_\Hdg^+$ as the Bialynicki-Birula map is an isomorphism.

\subsection{Construction}We note that there is a $\mu \in [\mu]$ defined over $\qpbreve([\mu])$ --- we fix such a $\mu$, and write $V[-1] \subseteq V \otimes \qpbreve([\mu])$ for the associated $-1$ weight space. 

\begin{lemma}\label{lemma.functor-vb-mod}
$\mathcal{M}_{b,[\mu]}$ is isomorphic over $\Spd \qpbreve([\mu])$ to the functor sending a test object $(\mc{X}/X_P, P/\Spd \qpbreve([\mu]))$ to the set of exact sequences of $B \otimes \mc{O}_{\mc{X}}$-modules 
\[ 0 \rightarrow V \otimes \mc{O}_{\mc{X}} \rightarrow \mc{E}_b(V) \rightarrow \infty_* W \rightarrow 0\]
where $W$ is a $\mc{O}_{\mc{X}_{P^\sharp}} \otimes_{\qp} B$-module locally isomorphic to ${V[-1] \otimes_{\qpbreve([\mu])} \mc{O}_{\mc{X}_{P^\sharp}}}$. 
\end{lemma}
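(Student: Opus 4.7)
The plan is to unwind definitions on both sides, exploiting that $G = \GL_B(V)$ linearizes everything. First, I would invoke the standard equivalence between $\GL_B(V)$-bundles on the relative thickened Fargues-Fontaine curve $\mc{X}$ and locally free $B \otimes_{\qp} \mc{O}_{\mc{X}}$-modules of the appropriate rank. Under this equivalence, $\mc{E}_1$ corresponds to $V \otimes \mc{O}_{\mc{X}}$ and $\mc{E}_b$ corresponds to $\mc{E}_b(V)$. A modification $\mc{E}_1 \to \mc{E}_b$ — which by definition is an isomorphism of $G$-bundles over $\mc{X} \setminus \infty$ whose type at $\infty$ is bounded by $[\mu]$ — translates to a $B$-linear meromorphic isomorphism $V \otimes \mc{O}_{\mc{X}} \to \mc{E}_b(V)$ of the same form.

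Second, I would use that $\mu$ has weights in $\{0,1\}$ on the standard representation $V$ to argue that any such meromorphic map actually extends to an honest injection $V \otimes \mc{O}_{\mc{X}} \hookrightarrow \mc{E}_b(V)$ on all of $\mc{X}$, not just away from $\infty$. The cokernel is then a coherent sheaf supported on the formal thickening $\mc{X}_{P^\sharp}$ of the Cartier divisor $\infty$ by the test object $P$, i.e. of the form $\infty_* W$ for some $B \otimes \mc{O}_{\mc{X}_{P^\sharp}}$-module $W$. The minuscule type $[\mu]$ condition then translates, via the usual identification of the Schubert cell in the $\mbb{B}^+_\dR$-affine Grassmannian with a space of minuscule lattice modifications, into the statement that $W$ is locally on $\mc{X}_{P^\sharp}$ isomorphic as a $B \otimes \mc{O}_{\mc{X}_{P^\sharp}}$-module to $V[-1] \otimes_{\qpbreve([\mu])} \mc{O}_{\mc{X}_{P^\sharp}}$, where $V[-1]$ is the distinguished weight subspace pinned down after choosing $\mu \in [\mu]$. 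Conversely, any exact sequence of the stated form restricts to a $B$-linear isomorphism on $\mc{X} \setminus \infty$ whose type at $\infty$ is exactly minuscule of type $[\mu]$, by construction of $W$.

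Third, I would check that this bijection is natural in the test object $(\mc{X}/X_P, P)$ in the sense of inscribed $v$-sheaves. This should be essentially immediate: the inscribed structure on $\mc{M}_{b,[\mu]}$ from \cite[\S9.4]{Howe.InscriptionTwistorsAndPAdicPeriods} is defined via exactly these thickenings $\mc{X}_{P^\sharp}$ at $\infty$, and the target functor uses the same thickenings for $W$. The main obstacle is the second step — verifying the biconditional match between the Schubert cell datum $[\mu]$ (with its minuscule, non-negative-weight constraint) and the local shape condition on $W$. This is a local computation at $\infty$, and I have to be careful with sign conventions: as flagged in \cref{remark.hodge-cocharacter-change}, the homological convention used here differs from the Hodge-filtration convention of \cite{IvanovWeinstein.TheSmoothLocusInInfiniteLevelRapoportZinkSpaces}, so the weight subspace $V[-1]$ must be reconciled against the direction of the modification so that it produces the cokernel rather than, say, a kernel or an extension in the opposite direction.
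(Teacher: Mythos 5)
Your proposal is correct and takes essentially the same route as the paper: the paper's proof obtains the exact sequence by push-out along $V$ and inverts this by recovering the modification of $G$-torsors from the $B\otimes\mc{O}_{\mc{X}}$-linear Isom sheaves, which is precisely your equivalence between $\GL_B(V)$-bundles and $B\otimes\mc{O}_{\mc{X}}$-modules. Your second and third steps (the local matching at $\infty$ of the minuscule type with the shape of $W$, and naturality in the inscribed test object) are just an expansion of what the paper compresses into the phrase ``modification of type $\mu$.''
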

\begin{proof}
Given a modification $\mc{E}_1 \rightarrow \mc{E}_b$ of type $\mu$, we obtain by push-out along the representation $V$ such an exact sequence. The inverse is given by sending such an exact sequence to the induced modification of $G$-torsors
\[ \mc{E}_1 = \mc{I}som_{B \otimes \mc{O}_{\mc{X}}}(V \otimes \mc{O}_{\mc{X}}, V \otimes \mc{O}_{\mc{X}}) \dashrightarrow \mc{I}som_{B \otimes \mc{O}_{\mc{X}}}(V \otimes \mc{O}_{\mc{X}}, \mc{E}_b(V)) = \mc{E}_b.  \]
\end{proof}

Note that the underlying $v$-sheaf of the functor in \cref{lemma.functor-vb-mod} is precisely that functor appearing in \cite[Proposition 3.2.3]{IvanovWeinstein.TheSmoothLocusInInfiniteLevelRapoportZinkSpaces}. 

\subsubsection{}We fix a determinant $\tau$. Then, in \cite[\S 5.6]{IvanovWeinstein.TheSmoothLocusInInfiniteLevelRapoportZinkSpaces}, Ivanov and Weinstein construct a smooth quasi-projective scheme $Z/X_{\mathbb{C}_p^\flat}^\alg$ whose associated moduli of sections is isomorphic to the subfunctor corresponding to $\overline{\M_{b,[\mu]}^\tau}$. In fact, with no further work, we obtain (using the moduli of sections construction of \cite[\S4.4]{Howe.InscriptionTwistorsAndPAdicPeriods}):
\begin{proposition}\label{prop.FS-moduli}
    For $Z/X_{\mathbb{C}_p^\flat}$ the smooth quasi-projective scheme constructed in \cite[\S 5.6]{IvanovWeinstein.TheSmoothLocusInInfiniteLevelRapoportZinkSpaces}, $(Z/X_{\mathbb{C}_p^\flat}^\alg)^\lfid =(Z^\an/X_{\mathbb{C}_p^\flat})^\lfid = \M_{b,[\mu]}^\tau.$ 
\end{proposition}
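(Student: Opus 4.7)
The plan is to identify all three inscribed $v$-sheaves in \cref{prop.FS-moduli} as representing a common functor on inscribed test objects. By \cref{lemma.functor-vb-mod}, a test point of $\M_{b,[\mu]}$ over $(\mc{X}/X_P, P)$ is an exact sequence
\[ 0 \rightarrow V \otimes \mc{O}_{\mc{X}} \rightarrow \mc{E}_b(V) \rightarrow \infty_* W \rightarrow 0 \]
of $B \otimes \mc{O}_{\mc{X}}$-modules with $W$ locally modeled on $V[-1]$. Passage to the fiber over $\tau$ imposes the further condition that the induced sequence on determinants matches the data of $\tau$; call the resulting functor $F_\tau$. The goal becomes matching $F_\tau$ with the inscribed moduli of sections of the Ivanov--Weinstein $Z$.

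First, I would review the construction of $Z/X_{\mbb{C}_p^\flat}^\alg$ from \cite[\S 5.6]{IvanovWeinstein.TheSmoothLocusInInfiniteLevelRapoportZinkSpaces}: $Z$ is realized as a locally closed subscheme of a relative Grassmannian over the algebraic Fargues--Fontaine curve, cut out by $B$-linearity, Hodge-numerical, and determinant-$\tau$ conditions. The heart of their argument is that for any $X_{\mbb{C}_p^\flat}^\alg$-scheme $T$, sections $T \rightarrow Z$ are in natural bijection with exact sequences of the above form on $T$ with the prescribed determinant. This bijection is constructed functorially using only algebraic, base-change-stable data; it therefore automatically extends to arbitrary inscribed test objects, replacing $T$ by the thickened relative Fargues--Fontaine curve $\mc{X}$.

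Second, I invoke the inscribed moduli of sections construction of \cite[\S 4.4]{Howe.InscriptionTwistorsAndPAdicPeriods}: for any quasi-projective $Z/X$ (algebraic or analytic) over the relative Fargues--Fontaine curve, $(Z/X)^\lfid$ is by definition the inscribed $v$-sheaf whose test points over $(\mc{X}/X_P, P)$ are sections of the base-change of $Z$ to $\mc{X}$. The comparison between the algebraic and analytic versions is formal because $Z$ is quasi-projective, so every relevant section factors through a finite-type piece over which GAGA applies. Combining this with the functorial bijection above and \cref{lemma.functor-vb-mod} then yields the three desired identifications simultaneously.

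The main obstacle is essentially bookkeeping: one must verify that the IW defining conditions behave correctly under base change to $\mc{X}$ and that the determinant cut-off in their setup matches the $\tau$-fiber as defined in \cref{s.analysis}, taking account of the sign convention for the Hodge cocharacter recorded in \cref{remark.hodge-cocharacter-change}. Neither issue is serious because the IW construction is algebraic in nature and hence stable under arbitrary base change, but the proof should spell out enough of their construction to see that nothing depends on working with untilted perfectoid points rather than with general inscribed test objects; this is what makes the claim essentially automatic once both constructions are placed side by side.
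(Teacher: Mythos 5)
Your proposal is correct and follows essentially the same route as the paper: the paper's proof likewise rests on the facts that the algebraic and analytic moduli of sections always agree, and that the Ivanov--Weinstein construction of $Z$ represents the exact-sequence-with-determinant-$\tau$ functor for arbitrary schemes over $X_{\mathbb{C}_p^\flat}^\alg$ on which the pullback of $\infty$ is a Cartier divisor, so the identification with $\M_{b,[\mu]}^\tau$ via \cref{lemma.functor-vb-mod} is immediate on inscribed test objects. Your write-up simply spells out the base-change and determinant bookkeeping that the paper treats as immediate.
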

\begin{proof}
That the smooth quasi-projective scheme and its analytification have the same moduli of sections is always true. The identification with $\M_{b,[\mu]}^\tau$ is immediate from the construction in \cite[\S 5.6]{IvanovWeinstein.TheSmoothLocusInInfiniteLevelRapoportZinkSpaces}, which is by analytification of a smooth scheme that parameterizes such exact sequences with determinant $\tau$ for any scheme over $X_{\mathbb{C}_p^\flat}^\alg$ such that the pullback of the canonical divisor $\infty$ is a Cartier divisor. 
\end{proof}

\begin{corollary}\label{corollary.non-sp-locus}
The map $\overline{\M_{b,[\mu]}^\tau}^{\nvsp} \rightarrow \Spd \mathbb{C}_p$ is cohomologically smooth. 
\end{corollary}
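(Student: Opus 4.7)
The plan is to apply the Fargues--Scholze Jacobian criterion to the presentation of $\M_{b,[\mu]}^\tau$ as a moduli of sections. \cref{prop.FS-moduli} identifies $\M_{b,[\mu]}^\tau$ with the inscribed moduli of sections of the smooth quasi-projective scheme $Z/X_{\mathbb{C}_p^\flat}^\alg$ constructed in \cite[\S5.6]{IvanovWeinstein.TheSmoothLocusInInfiniteLevelRapoportZinkSpaces}, and the Jacobian criterion will give cohomological smoothness at a geometric section $s:\Spd C \to \M_{b,[\mu]}^\tau$ provided $s^*T_{Z/X_{\mathbb{C}_p^\flat}^\alg}$ has all strictly positive Harder--Narasimhan slopes on the Fargues--Fontaine curve.

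First, I would identify the Harder--Narasimhan polygon of $s^*T_{Z/X_{\mathbb{C}_p^\flat}^\alg}$ with that of $s^*\mc{E}^\circ_{\mr{max}}$. To this end, I would compare two descriptions of the inscribed tangent bundle of $\M_{b,[\mu]}^\tau$ at $s$: on the one hand the moduli of sections formalism of \cite[\S4.4]{Howe.InscriptionTwistorsAndPAdicPeriods} supplies the tangent bundle $\BC(s^*T_{Z/X_{\mathbb{C}_p^\flat}^\alg})$, and on the other hand \cref{lemma.tangent-tau} supplies $\BC(s^*\mc{E}^\circ_{\mr{max}})$. Because the isomorphism of \cref{prop.FS-moduli} takes place at the level of inscribed $v$-sheaves, it will transport one description into the other, producing an isomorphism $\BC(s^*T_{Z/X_{\mathbb{C}_p^\flat}^\alg}) \cong \BC(s^*\mc{E}^\circ_{\mr{max}})$ over $\Spd C$.

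The hardest part --- the "small additional argument" flagged in the introduction's footnote --- will be to promote this identification of Banach--Colmez spaces to an identification of Harder--Narasimhan polygons, since the Jacobian criterion is phrased at the level of vector bundles. The approach would be to use that $s^*\mc{E}^\circ_{\mr{max}}$ has non-negative Harder--Narasimhan slopes (\cref{lemma.non-negative-slopes}) together with the fact that a non-negatively sloped bundle on the Fargues--Fontaine curve is determined up to isomorphism by its Banach--Colmez space of global sections: each positive slope $\lambda$ contributes a connected pro-\'{e}tale factor whose multiplicity is visible in $\BC$, while a zero slope contributes a copy of $\underline{E}$. This recognition principle should force $s^*T_{Z/X_{\mathbb{C}_p^\flat}^\alg}$ to have the same Harder--Narasimhan polygon as $s^*\mc{E}^\circ_{\mr{max}}$.

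To conclude, I would invoke \cref{prop.HN-slopes-condition}, which characterizes the non-very-special locus as exactly the locus where $s^*\mc{E}^\circ_{\mr{max}}$ has no zero Harder--Narasimhan slope, hence only strictly positive slopes. The Jacobian criterion then applies at every non-very-special geometric point and delivers cohomological smoothness of $\overline{\M_{b,[\mu]}^\tau}^{\nvsp} \to \Spd \mathbb{C}_p$.
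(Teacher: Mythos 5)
Your overall route is the same as the paper's: use \cref{prop.FS-moduli} to realize $\overline{\M_{b,[\mu]}^\tau}$ as the moduli of sections of the Ivanov--Weinstein scheme $Z$, transport the inscribed tangent-bundle computation of \cref{lemma.tangent-tau} to get $\BC(s^*T_{Z^\an/X_{\mathbb{C}_p^\flat}})\cong \BC(z^*\mc{E}^\circ_{\mr{max}})$, and then invoke the Jacobian criterion together with \cref{prop.HN-slopes-condition}. However, your ``recognition principle'' step has a genuine gap, and it is exactly at the point the introduction's footnote warns about. The principle you state (a bundle with \emph{non-negative} slopes is determined by its Banach--Colmez space of sections) can only be applied to $s^*T_{Z/X_{\mathbb{C}_p^\flat}}$ if you already know that this bundle has non-negative Harder--Narasimhan slopes, and nothing in your argument rules out a negative-slope summand: a bundle of strictly negative slopes has vanishing $v$-sheaf of global sections, so such a summand is completely invisible in $\BC(s^*T_{Z/X_{\mathbb{C}_p^\flat}})$ and cannot be detected by comparing Banach--Colmez spaces. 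This matters because the Jacobian criterion requires \emph{all} slopes of $s^*T_{Z/X_{\mathbb{C}_p^\flat}}$ to be strictly positive, not merely the slopes of its non-negative part; as written, your argument would only show that the non-negative part agrees with $z^*\mc{E}^\circ_{\mr{max}}$.

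The paper closes this gap as follows: first $\BC(s^*T_{Z^\an/X_{\mathbb{C}_p^\flat}})=\BC((s^*T_{Z^\an/X_{\mathbb{C}_p^\flat}})_{\geq 0})$, then the full faithfulness of $\mc{F}\mapsto \BC(\mc{F})$ on bundles of non-negative slope (Le Bras) upgrades the isomorphism of Banach--Colmez spaces to an isomorphism of bundles $(s^*T_{Z^\an/X_{\mathbb{C}_p^\flat}})_{\geq 0}\cong z^*\mc{E}^\circ_{\mr{max}}$ (using \cref{lemma.non-negative-slopes} for the right-hand side), and finally a rank count eliminates any negative part: both $z^*\mc{E}^\circ_{\mr{max}}$ (a modification of $\mf{g}^\circ\otimes_E\mc{O}$) and $s^*T_{Z^\an/X_{\mathbb{C}_p^\flat}}$ (the relative tangent bundle of $Z$, of relative dimension $\dim G^\der$) have rank $\dim G^\der$, so $(s^*T_{Z^\an/X_{\mathbb{C}_p^\flat}})_{\geq 0}=s^*T_{Z^\an/X_{\mathbb{C}_p^\flat}}$ and hence $s^*T_{Z^\an/X_{\mathbb{C}_p^\flat}}\cong z^*\mc{E}^\circ_{\mr{max}}$ on the nose. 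With that equality in hand, \cref{prop.HN-slopes-condition} identifies $\overline{\M_{b,[\mu]}^\tau}^{\nvsp}$ with the smooth locus $\mc{M}_Z^\sm$ of \cite[Definition IV.4.1]{FarguesScholze.GeometrizationOfTheLocalLanglandsCorrespondence}, and the Jacobian criterion concludes. You should also replace your heuristic justification of the recognition principle (counting pro-\'etale factors) by the precise citation to Le Bras's full faithfulness, since that is what licenses passing from an isomorphism of $v$-sheaves to an isomorphism of bundles.
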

\begin{proof}
We claim that $\overline{\M_{b,[\mu]}^\tau}^{\nvsp}$ is the locus $\mc{M}_Z^\sm$ appearing in \cite[Definition IV.4.1]{FarguesScholze.GeometrizationOfTheLocalLanglandsCorrespondence}. Given this claim, the cohomological smoothness follows from the Jacobian criterion \cite[Theorem IV.4.2]{FarguesScholze.GeometrizationOfTheLocalLanglandsCorrespondence}. To see this claim it suffices to check at rank one points $z$ as above. For these note that, for $s$ the associated section of $Z^\an$, \cite[Example 4.4.3-(2)]{Howe.InscriptionTwistorsAndPAdicPeriods} implies $\BC(s^*T_{Z^\an/X_{\mathbb{C}_p^\flat}})=T_{(Z/E)^\lfid,z}.$ But, by \cref{prop.FS-moduli} and \cref{lemma.tangent-tau}, this is also equal to $\BC(z^*\mc{E}^\circ_{\mr{max}})$. Since $\BC(s^*T_{Z^\an/X_{\mathbb{C}_p^\flat}}) = \BC((s^*T_{Z^\an/X_{\mathbb{C}_p^\flat}})_{\geq 0})$, where the subscript denotes the non-negative slope part, by full faithfulness of the functor from vector bundles of non-negative slope to Banach-Colmez spaces (a part of \cite[Th\'{e}or\`{e}me 1.2]{LeBras.EspacesDeBanachColmezEtFaisceauxCoherentsSurLaCourbeDeFarguesFontaine}), we deduce $(s^*T_{Z^\an/X_{\mathbb{C}_p}^\flat})_{\geq 0}=z^*\mc{E}^\circ_{\mr{max}}$. But it is straightforward to check that both $z^*\mc{E}^\circ_{\mr{max}}$ and $s^*T_{Z^\an/X_{\mathbb{C}_p^\flat}}$ have dimension equal to the dimension of $G^\der$, thus we conclude $z^*\mc{E}^\circ_{\mr{max}}=s^*T_{Z^\an/X_{\mathbb{C}_p^\flat}}$. The above discussion and \cref{prop.HN-slopes-condition} shows that the space $\mc{M}_Z^{\sm}$ where $s^*T_{Z^\an/X_{\mathbb{C}_p^\flat}}=z^*\mc{E}^\circ_{\mr{max}}$ has strictly positive slopes is equal to the space $\overline{\M_{b,[\mu]}^\tau}^{\nvsp}$, as desired. 
\end{proof}

\subsection{Endomorphism characterization of very special points}
Consider a rank one geometric point $z: \Spd C \rightarrow \M_{b,[\mu]}$. Then, via the Scholze-Weinstein classification \cite{ScholzeWeinstein.ModuliOfpDivisibleGroups}, $\pi_\HT(z)$ defines a $p$-divisible group up-to-isogeny $G_z$ over $\mathcal{O}_{C^\sharp}$ with an inclusion $B \hookrightarrow \End(G_z)=: A_x$. We write $A_z=\End_B(G)$. For $F$ the center of $B$, we have $F \hookrightarrow A_z$. 

\begin{lemma}\label{lemma.vsp-pdiv}With notation as above, $z$ is very special if and only if $F\hookrightarrow A_z$ is not an isomorphism.
\end{lemma}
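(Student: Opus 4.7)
The plan is to combine \cref{prop.HN-slopes-condition} (specifically, the equivalence between very-specialness and non-vanishing of $\Lie H_z(\Spd C)$) with the Scholze--Weinstein classification, and then reduce the claim to a pure algebra statement about the reduced trace on the semisimple algebra $A_z$.

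First I identify $\Lie H_z(\Spd C)$ concretely. By the Scholze--Weinstein classification, a $B$-linear endomorphism of $G_z$ in the isogeny category is the same as a $B$-linear endomorphism of the rational Tate module $V_\qp(G_z)$ preserving the Hodge--Tate filtration $\pi_\HT(z)$. After fixing the universal trivialization $V_\qp(G_z) \cong V\otimes\qp$ as $B$-modules, this identifies $A_z$ with the subalgebra of $\End_B(V\otimes\qp)$ preserving $\pi_\HT(z)$, so $A_z^\times = \mathrm{Stab}_{\GL_B(V)(\qp)}(\pi_\HT(z))$. Intersecting with $G^\der(\qp) = \SL_B(V)(\qp)$ and passing to Lie algebras of $p$-adic Lie groups (which commutes with closed intersections) yields
\[
\Lie H_z(\Spd C) = A_z \cap \mathfrak{sl}_B(V) = \ker\bigl(\mathrm{tr}_{\mathrm{red}}|_{A_z} : A_z \to F\bigr),
\]
where $\mathrm{tr}_{\mathrm{red}} : \End_B(V) \to F$ is the reduced trace on the semisimple $F$-algebra $\End_B(V)$ (whose kernel is $\mathfrak{sl}_B(V) = \Lie G^\der$ by definition).

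Second, I reduce to the algebra statement: $\mathrm{tr}_{\mathrm{red}}|_{A_z}$ is injective if and only if $A_z = F$. For the $(\Leftarrow)$ direction, $\mathrm{tr}_{\mathrm{red}}(1) \in F$ equals the tuple $n$ of reduced degrees of the simple factors of $\End_B(V)$; each component is a positive integer and hence a unit in characteristic zero, so $\mathrm{tr}_{\mathrm{red}}|_F$ is the injective map $f \mapsto nf$. For the $(\Rightarrow)$ direction, if $A_z \supsetneq F$, pick any $a \in A_z \setminus F$ and set $a_0 := a - n^{-1}\mathrm{tr}_{\mathrm{red}}(a) \in A_z$; then $\mathrm{tr}_{\mathrm{red}}(a_0) = 0$ and $a_0 \neq 0$ (else $a \in F$), exhibiting a nonzero element of $\ker(\mathrm{tr}_{\mathrm{red}}|_{A_z})$.

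Combining these two steps with \cref{prop.HN-slopes-condition} gives the lemma: $z$ is very special iff $\Lie H_z(\Spd C) \neq 0$ iff $\ker(\mathrm{tr}_{\mathrm{red}}|_{A_z}) \neq 0$ iff $A_z \supsetneq F$. The main obstacle is really Step~1, where one must invoke Scholze--Weinstein carefully (in the style used in \cite[\S2]{IvanovWeinstein.TheSmoothLocusInInfiniteLevelRapoportZinkSpaces}) to identify $A_z$ with the subalgebra of $\End_B(V\otimes\qp)$ preserving $\pi_\HT(z)$, and to pass from this set-theoretic identification to a Lie-algebra identification via standard $p$-adic Lie theory; once that is in hand, the rest is elementary semisimple algebra.
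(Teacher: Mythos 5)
Your proof is correct and follows essentially the same route as the paper: reduce via \cref{prop.HN-slopes-condition} to the (non)vanishing of $\Lie H_z(\Spd C)$, identify $A_z$ via Scholze--Weinstein with the algebra of $B$-linear, Hodge--Tate-filtration-preserving endomorphisms of $V$, and conclude using the decomposition $\End_B(V)=F\oplus \Lie G^\der$. Your explicit reduced-trace argument is just a spelled-out version of the paper's appeal to that direct sum decomposition, so there is no substantive difference.
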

\begin{proof}
    By \cref{prop.HN-slopes-condition}, being very special is equivalent to $(\Lie H_z)(\Spd C) \neq 0$.  But, essentially by definition, nonzero elements of $\Lie H_z(\Spd C)$ correspond to noncentral elements in the $B$-linear endomorphisms of $V$ preserving the Hodge-Tate filtration (using that $\Lie \GL_B(V)=\End_B(V) = F \oplus (\Lie \GL_b(V))^\circ$); by the Scholze-Weinstein \cite{ScholzeWeinstein.ModuliOfpDivisibleGroups} classification, these give elements of $A_z \backslash F$.  
\end{proof}

The condition of \cref{lemma.vsp-pdiv} is taken as the definition of special in \cite[Definition 3.5.2]{IvanovWeinstein.TheSmoothLocusInInfiniteLevelRapoportZinkSpaces}. 

\bibliographystyle{plain}
\bibliography{references, preprints}

\begin{thebibliography}{10}

\bibitem{FarguesScholze.GeometrizationOfTheLocalLanglandsCorrespondence}
Laurent Fargues and Peter Scholze.
\newblock Geometrization of the local {L}anglands correspondence.
\newblock {\em To appear in Asterisque, ar{X}iv:2102.13459}.

\bibitem{Howe.InscriptionTwistorsAndPAdicPeriods}
Sean Howe.
\newblock Inscription, twistors, and $p$-adic periods.
\newblock {\em ar{X}iv}, 2025.

\bibitem{HoweKlevdal.AdmissiblePairsAndpAdicHodgeStructuresIIIVariationAndUnlikelyIntersection}
Sean Howe and Christian Klevdal.
\newblock Admissible pairs and $p$-adic {H}odge structures {III}: Variation and unlikely intersection.
\newblock {\em In preparation}.

\bibitem{HoweKlevdal.AdmissiblePairsAndpAdicHodgeStructuresITranscendenceOfTheDeRhamLattice}
Sean Howe and Christian Klevdal.
\newblock Admissible pairs and $p$-adic {H}odge structures {I}: {T}ranscendence of the de {R}ham lattice.
\newblock {\em To appear in Algebra and Number Theory, ar{X}iv:2308.11065}, 2023.

\bibitem{HoweKlevdal.AdmissiblePairsAndpAdicHodgeStructuresIITheBiAnalyticAxLindemannTheorem}
Sean Howe and Christian Klevdal.
\newblock Admissible pairs and $p$-adic {H}odge structures {II}: The bi-analytic {A}x-{L}indemann theorem.
\newblock {\em arXiv:2308.11064}, 2023.

\bibitem{IvanovWeinstein.TheSmoothLocusInInfiniteLevelRapoportZinkSpaces}
Alexander~B. Ivanov and Jared Weinstein.
\newblock The smooth locus in infinite-level {R}apoport-{Z}ink spaces.
\newblock {\em Compos. Math.}, 156(9):1846--1872, 2020.

\bibitem{LeBras.EspacesDeBanachColmezEtFaisceauxCoherentsSurLaCourbeDeFarguesFontaine}
Arthur-C\'{e}sar Le~Bras.
\newblock Espaces de {B}anach-{C}olmez et faisceaux coh\'{e}rents sur la courbe de {F}argues-{F}ontaine.
\newblock {\em Duke Math. J.}, 167(18):3455--3532, 2018.

\bibitem{RapoportZink.PeriodSpacesForpDivisibleGroups}
M.~Rapoport and Th. Zink.
\newblock {\em Period spaces for {$p$}-divisible groups}, volume 141 of {\em Annals of Mathematics Studies}.
\newblock Princeton University Press, Princeton, NJ, 1996.

\bibitem{RapoportViehmann.TowardsATheoryOfLocalShimuraVarieties}
Michael Rapoport and Eva Viehmann.
\newblock Towards a theory of local {S}himura varieties.
\newblock {\em M\"{u}nster J. Math.}, 7(1):273--326, 2014.

\bibitem{ScholzeWeinstein.ModuliOfpDivisibleGroups}
Peter Scholze and Jared Weinstein.
\newblock Moduli of {$p$}-divisible groups.
\newblock {\em Camb. J. Math.}, 1(2):145--237, 2013.

\bibitem{ScholzeWeinstein.BerkeleyLecturesOnPAdicGeometryAMS207}
Peter Scholze and Jared Weinstein.
\newblock {\em Berkeley Lectures on p-adic Geometry: (AMS-207)}.
\newblock Princeton University Press, 2020.

\bibitem{Weinstein.SemistableModelsForModularCurvesOfArbitraryLevel}
Jared Weinstein.
\newblock Semistable models for modular curves of arbitrary level.
\newblock {\em Invent. Math.}, 205(2):459--526, 2016.

\end{thebibliography}

\end{document}